\newcounter{rmnum}
\newenvironment{romannum}{\begin{list}{{\upshape (\roman{rmnum})}}{\usecounter{rmnum}
\setlength{\leftmargin}{4pt}
\setlength{\rightmargin}{4pt}
\setlength{\itemsep}{1pt}
\setlength{\itemindent}{5pt}
}}{\end{list}}
\def\Ebox#1#2{%
\begin{center}
\includegraphics[width= #1\hsize]{#2}\end{center}}
\def\rd#1{{\color{red}#1}}
\def\spm#1{{\color{red}spm: #1}}
\def\epsy{\varepsilon}
\def\eqdef{\mathbin{:=}}
\def\limsup{\mathop{\rm lim\ sup}}
\def\argmin{\mathop{\rm arg\, min}}
\def\transpose{{\hbox{\it\tiny T}}}
\def\IEEEQEDclosed{\mbox{\rule[0pt]{1.3ex}{1.3ex}}} % for a filled box
\def\qed{\hspace*{\fill}~\IEEEQEDclosed\par\endtrivlist\unskip}%\mbox{\rule[0pt]{1.3ex}{1.3ex}}}
\newtheorem{theorem}{Theorem}[section]
\newtheorem{proposition}[theorem]{Proposition}
\newtheorem{lemma}[theorem]{Lemma}
\newcommand{\field}[1]{\mathbb{#1}}
\def\Re{\field{R}}
\def\barf{{\overline {f}}}
\def\barh{{\overline {h}}}
\def\barsigma{\overline{\sigma}}
\def\barSigma{\overline{\Sigma}}
\def\uQ{\underline{Q}}
\def\uF{\underline{F}}
\def\bfmath#1{{\mathchoice{\mbox{\boldmath$#1$}}%
{\mbox{\boldmath$#1$}}%
{\mbox{\boldmath$\scriptstyle#1$}}%
{\mbox{\boldmath$\scriptscriptstyle#1$}}}}
 \def\bfma{\bfmath{a}}
\def\bfmx{\bfmath{x}}
\def\bfmu{\bfmath{u}}
\def\bfmz{\bfmath{z}}
\def\bfmM{\bfmath{M}}
\def\bfmW{\bfmath{W}}
 \def\FRAC#1#2#3{\genfrac{}{}{}{#1}{#2}{#3}}
\def\ddt{{\mathchoice{\FRAC{1}{d}{dt}}%
{\FRAC{1}{d}{dt}}%
{\FRAC{3}{d}{dt}}%
{\FRAC{3}{d}{dt}}}}
\def\ddr{{\mathchoice{\FRAC{1}{d}{dr}}%
{\FRAC{1}{d}{dr}}%
{\FRAC{3}{d}{dr}}%
{\FRAC{3}{d}{dr}}}}
\def\ddr{{\mathchoice{\FRAC{1}{d}{dy}}%
{\FRAC{1}{d}{dy}}%
{\FRAC{3}{d}{dy}}%
{\FRAC{3}{d}{dy}}}}
\def\half{{\mathchoice{\FRAC{1}{1}{2}}%
{\FRAC{1}{1}{2}}%
{\FRAC{3}{1}{2}}%
{\FRAC{3}{1}{2}}}}
\def\clE{{\cal E}}
\def\Expect{{\sf E}}
\def\urls#1{{\small \url{#1}}}
\def\rd#1{{\color{red}#1}}
\def\Lemma#1{Lemma~\ref{#1}}
\def\Prop#1{Prop.~\ref{#1}}
\def\Theorem#1{Theorem~\ref{#1}}
\def\Section#1{Section~\ref{#1}}
\def\Fig#1{Fig.~\ref{#1}}
\newlength{\noteWidth}
\long\def\notes#1{\ifinner
             {\tiny #1}
             \else
              \marginpar{\parbox[t]{\noteWidth}{\raggedright\tiny #1}}
               \fi}
\def\notes#1{\typeout{#1 !!!}}  % for final copy
\def\spm#1{\notes{SPM:  #1}}
\def\fb{\phi}
\def\barA{\bar A}
\def\barlambda{{\bar{\lambda}}}
\def\bfalpha{\bfmath{\alpha}}
\def\Real{\text{Re}}
\def\tilnu{\tilde{\nu}}
\def\tilf{\tilde f}
\def\bfgamma{\bfmath{\gamma}}
\def\bfxi{\bfmath{\xi}}
\def\bfnu{\bfmath{\nu}}
\def\half{{\mathchoice{\FRAC{1}{1}{2}}%
{\FRAC{1}{1}{2}}%
{\FRAC{3}{1}{2}}%
{\FRAC{3}{1}{2}}}}
\def\ddt{{\mathchoice{\FRAC{1}{d}{dt}}%
{\FRAC{1}{d}{dt}}%
{\FRAC{3}{d}{dt}}%
{\FRAC{3}{d}{dt}}}}
\def\ddr{{\mathchoice{\FRAC{1}{d}{dr}}%
{\FRAC{1}{d}{dr}}%https://www.overleaf.com/project/5c4612ddbae4140d6a60530f
{\FRAC{3}{d}{dr}}%
{\FRAC{3}{d}{dr}}}}
\def\ddu{{\mathchoice{\FRAC{1}{d}{du}}%
{\FRAC{1}{d}{du}}%
{\FRAC{3}{d}{du}}%
{\FRAC{3}{d}{du}}}}
\def\ddw{{\mathchoice{\FRAC{1}{d}{dw}}%
{\FRAC{1}{d}{dw}}%
{\FRAC{3}{d}{dw}}%
{\FRAC{3}{d}{dw}}}}
\def\haG{\widehat{G}}
\newcommand{\varqsa}{\theta}
\newcommand{\varode}{\raisebox{.15em}{\mbox{$\chi$}}}
\newcommand{\varscaled}{\widehat{\varode}}
\def\tilvarqsa{\tilde\varqsa}
\newcommand{\bfvarqsa}{\bfmath{\varqsa}}
\newcommand{\bfvarscaled}{\bfmath{\varscaled}}
\newcommand{\bfvarode}{\bfmath{\varode}}
\def\sfT{{\hbox{\tiny \textsf{T}}}}
 \title{\bf Optimal Rate of Convergence for Quasi-Stochastic Approximation
 }
\author{Andrey Bernstein\authorrefmark{1} \and Yue Chen\authorrefmark{1}  \and  Marcello Colombino\authorrefmark{1}   \and   Emiliano Dall'Anese\authorrefmark{3}  \and Prashant Mehta\authorrefmark{4} 
\and Sean Meyn\authorrefmark{5}%
%%%%%%%%  Affiliations here:  
\thanks{Authors are in alphabetical order.}
\thanks{\authorrefmark{1}A.B.,\, M.C.,\, and Y.C.\ are  with   NREL in Golden Colorado
({\tt\small  name.lastname@nrel.gov})}%
% \thanks{\authorrefmark{2}M.C. is with NREL in Golden and McGill in Montr\'eal
% ({\tt\small marcello.colombino@nrel.gov})}%
\thanks{\authorrefmark{3}E.D. is the Department of ECEE at the University of Colorado Boulder
({\tt\small Emiliano.Dallanese@colorado.edu})}%
\thanks{\authorrefmark{4}P.G.M. is with the Department of MAE at the University of Illinois Urbana-Champaign
({\tt\small mehtapg@illinois.edu})}%
\thanks{\authorrefmark{5}S.M. is with the Department of ECE at the University of Florida in Gainesville
({\tt\small meyn@ece.ufl.edu})}%
\thanks{\textbf{Acknowledgements}:
 Financial support from NSF CMMI grant 146277,  NSF CPS grant  1646229, and ARO grant W911NF1810334 is gratefully acknowledged. This work was authored in part by the National Renewable Energy Laboratory,
managed and operated by Alliance for Sustainable Energy, LLC, for
the U.S. Department of Energy (DOE) under Contract No. DE-AC36-08GO28308. This work was supported in part by the Laboratory Directed Research and Development (LDRD) Program at NREL. The views expressed in
the article do not necessarily represent the views of the DOE or the
U.S. Government. The U.S. Government retains and the publisher, by
accepting the article for publication, acknowledges that the U.S. Government
retains a nonexclusive, paid-up, irrevocable, worldwide license to publish
or reproduce the published form of this work, or allow others to do so, for
U.S. Government purposes.}
}
\begin{document}

\maketitle
\thispagestyle{empty}
%\pagestyle{empty}

%%%%%%%%%%%%%%%%%%%%%%%%%%%%%%%%%%%%%%%%%%%%%%%%%%%%%%%%%%%%%%%%%%%%%%%%%%%%%%%%

\begin{abstract}  
The  Robbins-Monro stochastic approximation algorithm is a foundation of many algorithmic frameworks for reinforcement learning (RL), and  often an efficient approach to solving (or approximating the solution to) complex  optimal control problems.  However, in many cases practitioners are unable to apply these techniques because of an inherent high variance. This paper aims to provide a general foundation for ``quasi-stochastic approximation,'' in which all of the processes under consideration are deterministic, much like quasi-Monte-Carlo for variance reduction in simulation. The variance reduction can be substantial, subject to tuning of pertinent  parameters in the algorithm.  This paper introduces a new coupling argument to establish optimal rate of convergence provided the gain is sufficiently large.  These  results are established for linear models, and tested also in non-ideal settings.

A major application of these general results is a new class of RL algorithms for deterministic state space models. In this setting, the main contribution is a class of algorithms for approximating the value function for a given policy,  using a different policy designed to introduce exploration.

%https://www.overleaf.com/project/5c4612ddbae4140d6a60530f

\end{abstract}

% \subsection*{Glossary  (currently for our consumption)}

% \begin{romannum}

% \item
% $\varqsa(t)$   Parameter at time $t$  \qquad  \verb+\varqsa(t)+

% \item 
% $\varscaled(u)
%  =  \theta(g(u))   $   \eqref{e:varscaled} \qquad  \verb+\varscaled(u)+

% \item 
% $\varode(w) $    Solution to the ODE \eqref{eq:avgODE} \qquad  \verb+\varode(w)+

% \item
% $\varode^u(w)$ 
% is the solution to \eqref{eq:avgODE} satisfying $ \varode^u(u) = \varscaled(u)  $  
% \eqref{eq:init}
 
%  \item $a(t)$ stepsize \eqref{eq:algo}

% \item
% Bold font for processes, such as
% $
% \bfvarqsa  =  \{ \varqsa(t) : t\ge 0 \}$.

% \bl{SM2ED  a convention since Meyn \&\ Tweedie -- bold for a sequence or process.   We can't use $W$ since this is a generic random variable, and $\{W_i : i\ge 0\}$ is too bulky.
% \\
% And I like my ``$dt$'' after integrals -- whoever is deleting, please explain or stop!!}

%\end{romannum}

%%%%%%%%%%%%%%%%%%%
\section{Introduction and Proposed Framework}
\label{s:intro}
%%%%%%%%%%%%%%%%%%%

Stochastic approximation concerns the root-finding problem $\barf(\varqsa^*) = 0$, where  $\varqsa^*\in\Re^d$ is a parameter to be computed or approximated, and $\barf: \Re^d  \to \Re^d$ is defined using the following expectation
\begin{equation}
\barf(\varqsa) := \Expect[f(\varqsa,\xi)]\,,\qquad \varqsa\in\Re^d\,, 
\label{e:barf}
\end{equation}
in which $f: \Re^d \times \Re^m \to \Re^d$ and $\xi$ is an $m$-dimensional random vector. With this problem in mind, the stochastic approximation (SA) method of Robbins and Monro~\cite{robmon51a,bor08a} involves recursive algorithms to estimate the parameter $\varqsa^*$. The simplest algorithm is defined by the following recursion ($n$ is the iteration index): 
\begin{equation} 
\varqsa_{n+1} = \varqsa_n + a_n f(\varqsa_n,\xi_n)\, , \qquad n\ge 0,
\label{eq:SA}
\end{equation}
where  $\bfxi :=\{\xi_n \}$ is an exogenous $m$-dimensional stochastic process, $a_n > 0$ is the step size, and $\varqsa_0\in\Re^d$ is given.   For consistency with \eqref{e:barf}, it is assumed that the distribution of $\xi_n$ converges to that of $\xi$ as $n\to\infty$; e.g.,   $\bfxi$ is an ergodic Markov process.   

The motivation for the SA recursion (and also an important tool for convergence analysis) is the associated ordinary differential equation (ODE):
\begin{equation}
\ddu \varode (u)  = \barf \left( \varode (u) \right).
\label{eq:avgODE}
\end{equation}   
Under general assumptions, including boundedness of  the stochastic recursion \eqref{eq:SA}, the
limit points of \eqref{eq:SA} are a subset of the stationary points of the ODE; that is, solutions to $\barf(\varqsa^*) = 0$.  See~\cite{bor08a,bormey00a} and the earlier monographs \cite{benmetpri90,kusyin97}.

The upshot of stochastic approximation is that it can be implemented  without knowledge of the function $f$ or of the distribution of $\xi$; rather, it can rely on observations of the sequence $\{ f(\varqsa_n,\xi_n) \}$. This is one reason why these algorithms are valuable in the context of reinforcement learning (RL)~\cite{bor08a,bertsi96a,huachemehmeysur11,devmey17a,devmey17b}.
In such cases, the driving noise is typically modeled as a Markov chain.

To introduce the proposed framework, a key observation is that Markov chains need not be stochastic; for example, for given $\omega>0$, the sequence $\xi_n=(\cos(\omega n), \sin(\omega n)$  is a Markov chain on the unit circle in $\Re^2$. This motivates us to consider the following variant of algorithm \eqref{eq:SA}:
%It is convenient to develop the theory in a continuous time setting.    
%\spm{2019.  Is it useful to reference ML literature saying that it makes sense to develop algorithms in continuous time?}
%The
%\emph{quasi-stochastic approximation} (QSA) algorithm considered in this paper is defined by the ODE 
\begin{equation}
\ddt\varqsa(t) = a(t)f(\varqsa(t),\xi(t)) \,,
\label{eq:algo}
\end{equation}
where the ``noise'' $\bfxi$ is generated from a \emph{ deterministic} (possibly oscillatory) signal rather than a stochastic process.  We term this iteration a \emph{quasi-stochastic approximation} (QSA) algorithm\footnote{The-continuous time setting is adopted mainly for simplicity of exposition, especially for the convergence analysis; results  can be extended to the discrete-time setting, but are omitted for space constraints.}.  

One motivation for the proposed framework  was to   provide foundations for the Q-learning algorithm  introduced in~\cite{mehmey09a}, which treats nonlinear optimal control in continuous time. In \cite{mehmey09a} it was found in numerical experiments that the rate of convergence is superior to the ones of  traditional applications of Q-learning.  The present paper provides explanations for this fast convergence,  and presents a methodology to design algorithms with optimal rate of convergence.

\subsection{Contributions}

Contributions of the present paper are explained in terms of theoretical advancements for the QSA and applications.  

\paragraph{Analysis}
As in the  classical SA algorithm,   analysis is based on consideration of the associated  ODE \eqref{eq:avgODE} in which the ``averaged'' vector field is given by the ergodic average:  
\begin{equation}
\barf(\varqsa) = \lim_{T\rightarrow\infty}\frac{1}{T}\int_0^T f(\varqsa,\xi(t))\, dt,\ \ \textrm{for all }\varqsa\in\Re^d.
\label{eq:ergodic}
\end{equation}
The paper will introduce pertinent assumptions   in \Section{s:QSA} to ensure that the limit 
\eqref{eq:ergodic} exists, and that the averaged ODE  \eqref{eq:avgODE} has a unique globally asymptotically stable stationary point $\varqsa^*$. It will  be shown that the QSA \eqref{eq:algo} converges to the same limit. Relative to convergence theory  in the stochastic setting, new  results concerning rates of convergence will be offered in \Section{s:QSA}.     
 
The variance analysis outlined in \Section{s:QSA} begins by considering a linear setting $\bar{f}(\varqsa) = A(\varqsa-\varqsa^*)$, with $A$ Hurwitz. The linearity assumption is typical in much of the literature on variance for stochastic approximation  and is justified by constructing a linearized approximation for the original nonlinear algorithm \cite{kontsi04,kusyin97}.  Rates of convergence of nonlinear QSA is beyond the scope of this paper and will be pursued in future work.   

%Under additional mild assumptions on $\bfxi$ it is shown that the rate of convergence is   \emph{sub-linear}; that is, $ t^{\varrho} \|\varqsa(t)  - \varqsa^* \|   $ is a bounded function of time,  for some $\varrho\in(0,1]$.  A bound on $\varrho$ is obtained based on the eigenvalues of the matrix $A$.  \spm{to do!}
Under the assumption   that $I+A$ is Hurwitz (that is, each eigenvalue $\lambda$ of $A$ satisfies $\Real(\lambda) < -1$), it will be shown that the optimal rate of convergence of $1/t$ can be obtained.  In particular, 
there is a constant $\barsigma<\infty$ such that the following holds for each initial condition $\varqsa(0)$:
\begin{equation}
\limsup_{t\to\infty} \, t \|\varqsa(t)  - \varqsa^* \|  \le \barsigma 
\label{e:QSArate}
\end{equation} 
This assumption is stronger than what is imposed to obtain the Central Limit Theorem for stochastic approximation, which requires $\Real(\lambda) < -\half$. On the other hand, the conclusions for stochastic approximation algorithms are weaker, where the above bound is replaced by
\begin{equation}
\limsup_{t\to\infty} \, t \Expect[\|\varqsa(t)  - \varqsa^* \|^2 ] \le \barsigma^2 
\label{e:avar}
\end{equation}
That is, the rate is $1/\sqrt{t}$ rather than $1/t$
\cite{benmetpri90,kusyin97}.

\paragraph{Applications}
The most compelling applications are:   (i) gradient-free  optimization methods,  based on ideas from extremum-seeking control  \cite{liukrs12,arikrs03};  and (ii) RL for deterministic control systems.   Q-learning with function approximation is reviewed, following \cite{mehmey09a}. % The paper contains a brief exposition on how QSA can be applied in optimization, and   more attention to applications in RL.
It is shown that the most straightforward application of RL does not satisfy the conditions of the paper, and in fact may not be stable.  In view of these challenges,  a new class of ``off policy''  RL algorithms are introduced.   These algorithms  have attractive numerical properties, and are suitable for application to approximate policy iteration.  

\spm{The new approach is illustrated with application to  \rd{do be written}}

\subsection{Literature review}

%For background on quasi-Monte Carlo there are many books and recent proceedings, such as \cite{leopil14,glyowe16}.   There is a significant history of application in finance, and in fact the 
The first appearance of QSA methods appears to have originated in the domain of quasi-Monte Carlo methods applied to finance; see~\cite{lappagsab90,larpag12}.  Rates of convergence were obtained in \cite{shimey11}, but with only partial proofs, and without the coupling bounds reported here.

Gradient-free optimization has been studied in two, seemingly disconnected lines of work. The first line of work, typically known as ``bandit optimization'' (see e.g., \cite{Flaxman2005,Awerbuch2008,Bubeck2012}) leverages a \emph{stochastic} estimate of the gradient, based on a single or multiple evaluations of the objective function. Such algorithms have been analyzed extensively using tools similar to the classical SA approach, with similar conclusion on the high variance of the estimates \cite{Chen2019}. The second line of work, typically termed ``extremum-seeking control'' (ESC) \cite{liukrs12,arikrs03}, leverages a \emph{deterministic} estimate of the gradient; it is, in fact, a special application of the QSA theory developed in this paper. Stability of the classic ESC feedback scheme was analyzed in e.g., \cite{Krstic2000,HHW2000};  see \cite{arikrs03} 
for a comprehensive overview of the methods. %Sinusoids are the most popular perturbation signals in the ESC literature, leading to fast convergence rates in many applications, compared to stochastic perturbations. 
%In this paper, we hope to shed some light on the difference between these two approaches and on the relative success of the deterministic variants of gradient-free optimization.

%The existing literature on extremum-seeking control confirms numerically very fast convergence rates with small variance; however, they typically lack theoretical analysis. In the current paper, we hope to shed some light on the difference between these two approaches and on the relative success of the deterministic variants of gradient-free optimization.
%\andrey{Yue: can you please add more references/details on ES? \\
%The extremum seeking control was firstly popular in the 1940s - 1960s, focused on new algorithm development and performance evaluation. There was little progress on theory until 2000, when Wang and Krstics published stability analysis works on the classic extremum seeking feedback scheme \cite{Krstic2000,HHW2000}. Since then, extremum seeking control has been well studied and \cite{arikrs03} summarizes the comprehensive analysis of the extremum seeking control and its successful applications on various fields. Sinusoids are the most popular perturbations in the extremum seeking control and are shown to be fast convergence in many applications, in compare to stochastic perturbations.}

The rate of convergence result \eqref{e:avar} is an interpretation of classical results in the SA literature.   Under mild conditions, the ``limsup'' can be replaced by a limit,  and moreover the Central Limit Theorem  holds for the scaled error process $\{ \sqrt{t}[\varqsa(t)  - \varqsa^*]\}$ \cite{benmetpri90,kusyin97,bor08a}.  In these works, the  asymptotic covariance is the solution to a Lyapunov equation,  derived from the linearized ODE   and the noise covariance.    The results in the QSA setting  are  different. It is shown in  \Theorem{t:var} that under the Hurwitz assumption on $I+A$,  the scaled parameter estimates $\{  {t}[\varqsa(t)  - \varqsa^*]\}$  \textit{couple} with another process, obtained by integrating the noise process.   There is a large literature on techniques to minimize the  asymptotic variance in stochastic approximation, including Ruppert-Polyak-Juditsky (RPJ) averaging \cite{rup88,poljud92},  or adaptive gain selection,  resulting in the stochastic Newton-Raphson (SNR) algorithm  \cite{rup85,kusyin97}.  % See \cite{devmey17a,devmey17b} for a recent tutorial, with application to Q-learning.   
The problem of optimizing the rate for QSA (e.g.,  minimizing the bound $\barsigma$ in \eqref{e:QSArate}) through choice of algorithm parameters is not trivial.   This is because coupling occurs only when the eigenvalues of $A$ satisfy  $\Real(\lambda) < -1$.   If one eigenvalue reaches the lower bound, so that $\Real(\lambda_0) = -1$, then the theory presented here predicts that either $\barsigma = \infty$ or it is an unbounded function of the initial condition $\varqsa(0)$.

The fixed-policy Q-learning algorithm introduced here may be regarded as an \textit{off policy} TD-learning algorithm  (or SARSA) \cite{sze10,sutbar98}. The standard TD and SARSA algorithms are not well-suited to deterministic systems since the introduction of exploration creates bias.
By definition, an off policy method allows an arbitrary stable input, which can be chosen to speed value function estimation.   Q-learning also allows for exploration, but this is a nonlinear algorithm that often presents numerical challenges,  and there is little theory to support this class of algorithms beyond special cases such as optimal stopping, or the complex ``tabular'' case for finite state-space models \cite{sze10,sutbar98}. In the special case of linear systems with quadratic cost,    the off-policy TD learning algorithm introduced here reduces to \cite{braydsbar94}.

 \paragraph*{Organization}
The remainder of this paper is organized as follows.   
Sections \ref{s:appl} and \ref{sec:RL} contain several general application areas for QSA, along with numerical examples.   Stability and convergence theory is summarized in \Section{s:QSA}, with most technical proofs contained in the Appendix.  
Conclusions and future directions for research are summarized in
\Section{s:conclude}.

%\section{Applications and Numerical Examples}
\section{Motivational Application Examples}
\label{s:appl} 

%\subsection{Basic algorithm}

To motivate the QSA theory, this section  briefly discusses quasi Monte-Carlo and gradient-free optimization. A deeper look  at applications to optimal control, which is the main focus of this paper, will be given in Section \ref{sec:RL}.

% \begin{figure}[ht]
% \Ebox{.75}{OneMCplot.pdf} 
% \vspace{-1em}
% \caption{Sample paths of estimates obtained using Monte-Carlo and Quasi Monte-Carlo} 
% \label{f:OneMCplot}
% \end{figure}

%This section contains examples to  motivate the theory; a deeper look at applications to optimal control is po; stability and convergence analysis of the general algorithm \eqref{eq:algo} is postponed to %\Section{s:QSA}.   

 %In all the subsequent examples, it is assumed that the process $\bfxi$ appearing in  \eqref{eq:algo} is constructed so that it is deterministic, but \emph{ergodic}, in the sense that the limit in \eqref{eq:ergodic} exists.  Typically we construct $\bfxi$ as a mixture of sinusoids, or other periodic signals.  In such cases, continuity of $f$ is sufficient to obtain  \eqref{eq:ergodic}.  

\subsection{Quasi Monte-Carlo}
\label{s:QMC}

Consider the problem of obtaining the integral over the interval $[0,1]$ of a function $y\colon\Re\to\Re$.  
To fit the QSA model \eqref{eq:algo}, let $\xi(t) := t $ (modulo 1),  and set
\begin{equation}
f(\varqsa, \xi) := y(\xi) - \varqsa.
\end{equation}
The averaged function is then given by   
\begin{align*}
\barf (\varqsa) =
 \lim_{T\rightarrow\infty}\frac{1}{T}\int_0^T f(\varqsa,\xi(t))\, dt 
 = \int_0^1 y(t)\, dt  - \varqsa  
\end{align*}
so that  $\varqsa^* = \int_0^1 y(t) \, dt $.
Algorithm \eqref{eq:algo} is given by:
\begin{equation}
 \ddt \varqsa(t) = a(t) [y(\xi(t)) - \varqsa(t) ].
 \label{e:QMC}
\end{equation}

The numerical results that follow are based on the function $ y(t) =  e^{4t}\sin(100 t) $. This exotic function was among many tested -- it is used here only because the conclusions are particularly striking.

\begin{figure}[ht] 
\Ebox{1}{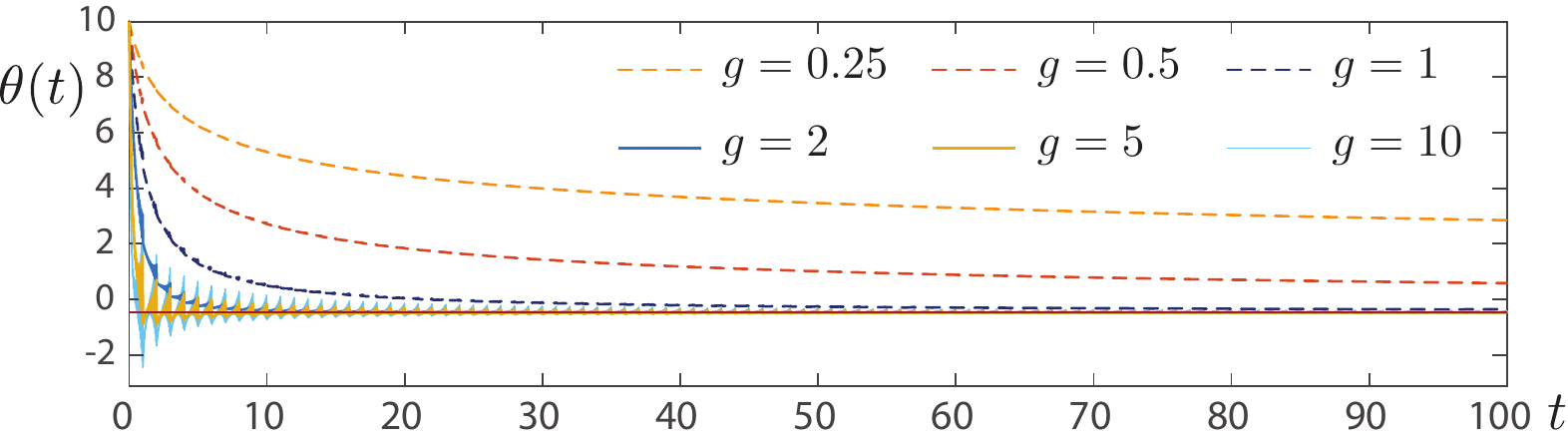} 
\caption{\small Sample paths of Quasi Monte-Carlo estimates.} 
\vspace{-.5em}
\label{f:plotsQMC}
\end{figure}

The differential equation was approximated using a standard Euler scheme with step-size $10^{-3}$.    
Two algorithms are compared in the numerical results that follow:  standard Monte-Carlo,   and versions of the deterministic algorithm \eqref{e:QMC}, differentiated by the gain $a(t)=g/(t+1)$.  
\Fig{f:plotsQMC}  shows typical sample paths of the resulting estimates for a range of gains;  in each case the algorithm was initialized with $\varqsa(0)=10$.  
The true mean is $\varqsa^* \approx   -0.4841$.

Independent trials were conducted to obtain variance estimates.    In each of $10^4$ independent runs, the common initial condition was drawn from $N(0,10)$,  and the estimate was collected at time $T=100$. 
\Fig{f:hists}  shows three histograms of estimates for standard Monte-Carlo, and QSA using gains $g=1$ and $2$.   
An alert reader must wonder:   \textit{why is the variance reduced by 4 orders of magnitude when the gain is increased from $1$ to $2$?}  The relative success of the high-gain algorithm is explained in  \Section{s:QSA}.

\begin{figure}[ht]
\Ebox{.99}{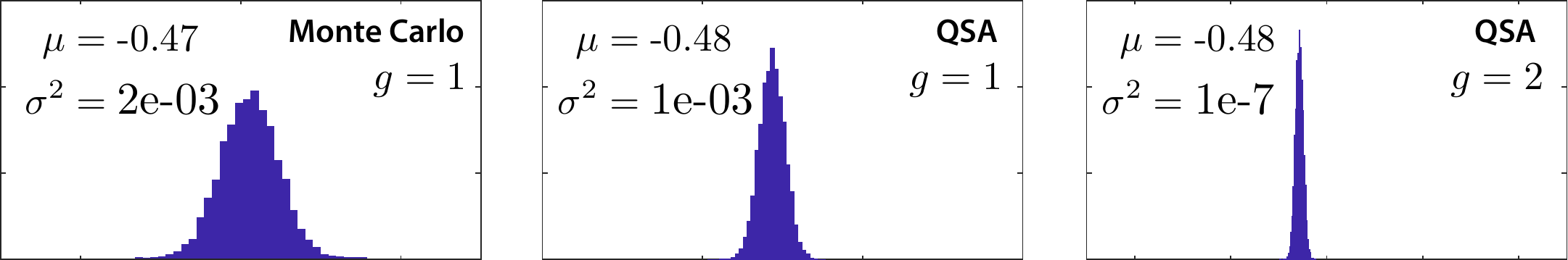} 
\caption{\small Histograms of Monte-Carlo and Quasi Monte-Carlo estimates after $10^4$ independent runs.} 
\vspace{-1em}
\label{f:hists}
\end{figure}

\def\probe{\xi}
\def\bfprobe{\bfxi}

\subsection{Gradient-Free Optimization} \label{sec:grad_free}

Consider the unconstrained convex minimization problem
\begin{equation}
\min_{\varqsa \in \Re^d} J(\varqsa).
\end{equation}
The goal is to minimize this function based on observations of $J(x(t))$,  where the signal $\bfmx$ is chosen by design.  It is assumed that    $J\colon\Re^d\to\Re$ is convex, twice continuously differentiable,  and that it has a unique minimizer, denoted as $\varqsa^*$.  Computation of the optimizer is thus equivalent to obtaining a zero of the gradient of $J$.   The goal is to design QSA algorithms that seek solutions to the equation $\barf(\varqsa^*) =0$, where 
\begin{equation}
\barf(\varqsa) := H \nabla J (\varqsa) \,,\qquad \varqsa\in\Re^d\,, 
\end{equation}
where the matrix $H$ is invertible, and will be part of the algorithm design.

%\rd{We need to discuss to see what we want to say.  I'd like to shorten this to less than 1/2 page.}

Two general algorithms are proposed in the following.  In each case, We design the signal $\bfmx$ as the sum of two terms $x(t) =  \varqsa(t) + \epsy \probe(t)$, $t\ge 0$,  where $\epsy>0$ and 
\begin{equation} \label{eq:probe}
\probe_i(t) = \sqrt{2}\sin (\omega_i t)   
\end{equation}
for $\omega_i \neq \omega_j$ for all $i \neq j$. It can be shown that this process satisfies\footnote{Other perturbation signals can be chosen as well, as soon as they satisfy \eqref{eq:zero_mean} and \eqref{eq:unit_cov}.}:
\begin{eqnarray} 
\lim_{T \rightarrow \infty }\frac{1}{T} \int_{t = 0}^T \probe(t) \, dt
&= & 0
\label{eq:zero_mean}
\\
\lim_{T \rightarrow \infty }\frac{1}{T} \int_{t = 0}^T \probe(t), \probe(t)^\sfT \, dt &=& I
\label{eq:unit_cov}
\end{eqnarray}
where $I$ is the identity matrix.

For a given $\varqsa \in \Re^d$, consider then the second-order Taylor expansion of the objective function around $\varqsa$:
\begin{align*}
&J(\varqsa + \epsy \probe(t)) =
J(\varqsa ) \\
&\quad + 
  \epsy \probe(t)^\sfT \nabla J (\varqsa)
  + \frac{1}{2} \epsy^2 \probe(t)^\sfT \nabla^2 J (\varqsa)  \probe(t)  + o(\epsy^2).
\end{align*}
Define $f (\varqsa, \xi) :=  -\probe J(\varqsa + \epsy \xi)$. It is easy to verify that under  \eqref{eq:zero_mean} and \eqref{eq:unit_cov},
%, and \eqref{eq:asym_ind}, 
one has that: 
\begin{align}
\barf (\varqsa) := \lim_{T \rightarrow \infty }\frac{1}{T} \int_{t = 0}^T f (\varqsa, \xi(t)) \,  dt
=
-\epsy \nabla J (\varqsa) +\text{Err}(\epsy)
\end{align}
where $\| \text{Err}(\epsy)\| \le O(\epsy^2)$.  
Thus, based on \eqref{eq:algo}, the following algorithm seeks for (approximate) zeros of $\nabla J$:

\noindent
\textbf{QSA Gradient Descent \#1: } 
\begin{equation}
\begin{aligned}
\ddt \varqsa(t) &= -  a(t) \probe(t)    J( x(t) )
 \\
 x(t)& =  \varqsa(t) + \epsy \probe(t)\,   .
\end{aligned}
\label{e:ES}
\end{equation}

In the second algorithm, it is assumed moreover that  $\{  \probe(t)\} $  is differentiable with respect to time. We define $\gamma(t) := (\xi(t), \ddt \xi(t)) \in \Re^{2m}$ as the perturbation signal; the reason for this definition will become apparent shortly.

The following limit is assumed to exist,  and the limit is assumed to be invertible: 
\[
 \Sigma_{\bullet} 
 \eqdef  
 \lim_{T\to\infty} \frac{1}{T} \int_0^{T}   \ddt\probe(t) \bigl[ \ddt\probe(t) \bigr]^\sfT  \, dt;
\] 
this is indeed true for the signal chosen in \eqref{eq:probe}.

For a given $\varqsa \in \Re^d$, we have
\[
\ddt J(\varqsa + \epsy \probe(t)) =  \epsy \ddt\probe(t)^\sfT  \nabla J\, (\varqsa + \epsy \probe(t)),   
\]
implying that
\begin{equation}
\begin{aligned}
  \ddt\probe(t)  \ddt J(\varqsa + \epsy \probe(t)) 
  &= 
   \epsy \ddt\probe(t) { \ddt\probe(t)}^\transpose     \nabla J\, (\varqsa + \epsy \probe(t))  
   \\
   & = \epsy \ddt\probe(t) { \ddt\probe(t)}^\transpose \Big [ \nabla J\, (\varqsa) \\
   &\quad \quad + \epsy \nabla^2 J\, (\varqsa) \probe(t) \Big ]  +O(\epsy^2)  
   \\
   &= \epsy \ddt\probe(t) { \ddt\probe(t)}^\transpose  \nabla J\, (\varqsa) + O(\epsy^2). 
\end{aligned}
   \label{eq:approx_f}
\end{equation}
Therefore,
\begin{equation} \label{eq:approx_barf}
\lim_{T \to \infty}\frac{1}{T} \int_0^T \ddt\probe(t)  \ddt J(\varqsa + \epsy \probe(t)) 
=
\epsy \Sigma_{\bullet} \nabla J\, (\varqsa) + O(\epsy^2).
\end{equation}
This motivates 

\noindent\textbf{QSA Gradient Descent \#2:}
\begin{equation}
\begin{aligned}
\ddt \varqsa(t) &= -  a(t) G  \ddt \xi(t)  \ddt J( x(t) )
 \\
 x(t)& =  \varqsa(t) + \epsy \xi(t)\,   ,
\end{aligned}
\label{e:QGD}
\end{equation}
in which $G$ is a given $d\times d$ matrix and is part of the design. The choice $G=\Sigma_{\bullet}^{-1}$ might be used to approximate the steepest descent algorithm.

In view of \eqref{eq:approx_f} and \eqref{eq:approx_barf}, the algorithm \eqref{e:QGD} is approximately equivalent to a QSA algorithm of the form \eqref{eq:algo}, with
\begin{equation*}
f(\varqsa, \gamma) := -\epsy G \gamma_2 \gamma_2^\transpose  \nabla J\, (\varqsa + \epsy \gamma_1), \quad \gamma = (\gamma_1, \gamma_2) \in \Re^{2m}
\end{equation*}
and
\begin{equation*}
\barf (\varqsa) \approx -\epsy G \Sigma_{\bullet} \nabla J\, (\varqsa).
\end{equation*}

\medbreak

%This is a simple instance of the extremum-seeking algorithms of \cite{liukrs12}. 

Either of the two algorithms   can be implemented based on observations of $\{J(  x(t) )\}$, without knowledge of the gradient. In fact, these algorithms are stylized versions of \emph{the extremum-seeking algorithm}  of \cite{liukrs12}.  The gain  $\bfma$ is typically assumed constant in this literature,  and there is a large literature on how to improve the algorithm, such as through the introduction of a linear filter on the measurements $\{ J(x(t))\}$.   It is hoped that the results of this paper can be used to guide algorithm design in this application.

\section{QSA for Reinforcement Learning}  %: Policy Evaluation} 
\label{sec:RL}

\def\dyn{g}
 
In this section we will show how QSA can be used to speed up the exploration phase which is needed for policy evaluation in reinforcement learning. 

\subsection{Off-policy TD Learning}
Consider the nonlinear state space model 
\[
\ddt x(t) = \dyn(x(t),u(t)) \,, \qquad t\ge 0\,  
\] 
with $x(t)\in\Re^n$,  $u(t)\in\Re^m$.   Given a cost function $c\colon\Re^{n+m}\to \Re$, and a feedback law $u(t) = \fb(x(t))$,  let $J$ denote the associated value function:
%%%%%%%%%%%%%%%%%%%%%%%%%%%%%%%%%%% 
\[
J^\fb(x) =  \int_0^\infty c(x(t),\fb(x(t)))\, dt\,, \qquad x=x(0). 
\]
The goal of policy evaluation (or TD-learning~\cite{tsiroy97a}) is to approximate this value function based on input-output measurements.   
%\textcolor{blue}{In particular, it is known that the TD(1) algorithm will minimize the mean square error of value functions $\|J^\phi - J^\varqsa\|^2$ for a particular norm (in a discrete-time, Markovian setting).  \textbf{Marcello: I don't understand this sentence, $J^\varqsa$ undefined}}
It is assumed in \cite{tsiroy97a} that the joint process $(\bfmx,\bfmu)$ is an ergodic Markov chain, which presents an obvious challenge in this deterministic setting:     this ergodic steady state will typically be degenerate.  It is common to introduce noise, as in Q-learning~ \cite{mehmey09a},  and also a discount factor in the definition of $J$ to ensure that  $J(x)<\infty $   for all $x$.    Following these modifications,   the  approximation objective has been changed significantly:  rather than approximating the original value function $J$,  the algorithm will provide an approximation for the value function with discounting, and with a randomized policy.   
Sufficient exploration and/or discounting may create significant distortion in the value function. 

The algorithm proposed here avoids these difficulties.   The construction begins with a Q-function~\cite{mehmey09a}  defined with respect to the given policy:
\[
Q^\fb(x,u) = J^\fb(x) + c(x, u) +\dyn(x,u )\cdot \nabla J^{\fb}\, (x).
\]
%This is one of two steps in the policy improvement algorithm (PIA):  the second step is to obtain a new policy:
%\begin{equation}\label{eq.policy.update}
%\fb^+(x) =\argmin_u Q^\fb(x,u)
%\end{equation}
%Our goal is to approximate $Q^\fb$ so that we can approximate the PIA algorithm. 
This function solves the fixed point equation
\begin{align}\label{eq:Q:pi}
Q^\fb(x,u) = \uQ^\fb(x) + c(x, u) +\dyn(x,u )\cdot \nabla \uQ^\fb\, (x)
\end{align}
in which we use the notational convention $\uF(x) = F(x,\fb(x))$ for any function $F$. We consider a family of functions $Q^{\fb\theta} (x,u)$ parameterized by $\theta$, and define the Bellman error for a given parameter as
\begin{align}
\begin{split}
\clE^\varqsa(x,u) &= -Q^{\fb\varqsa} (x,u)+ \uQ^{\fb\varqsa} (x) + c(x,u) \\
& +  \dyn(x,u)\cdot \nabla \uQ^{\fb\varqsa}\, (x)  
\end{split}
\label{e:BEQ}
\end{align}

The goal of policy evaluation is to create a data-driven algorithm that, without using information on the system's model, computes a parameter $\varqsa^*$ for which the Bellman error is small:  for example,   minimizes $\|\clE^\varqsa\|$ in a given norm.  In \cite{mehmey09a}, ideas from~\cite{farroy03a} are used  to construct a convex program for a related learning objective. In this paper, we propose an \emph{off-policy} RL algorithm: the value function for $\fb$ is approximated while the actual input $u$ of the system may be entirely unrelated. 

We choose a feedback law with ``excitation'', of the form
\begin{equation}
u(t) = \kappa(x(t),\xi(t))
\label{s:QSAfeedback}
\end{equation}
where $\kappa$ and $\bfxi$ are such that the resulting state trajectories are bounded for each initial condition, and that the joint process $(\bfmx,\bfmu,\bfxi)$ admits an ergodic steady state.
%(further assumptions can be found in the paper \andrey{Which paper? \cite{mehmey09a}?}). 
The goal is to find $\varqsa^*$ that minimizes the mean square error:
\begin{equation}
\|\clE^\varqsa\|^2
\eqdef
\lim_{T\to\infty}
\frac{1}{T} \int_0^T  \bigl[
\clE^\varqsa(x(t),u(t))  \bigr]^2\, dt.
\label{e:ergodicBellman}
\end{equation}

Similarly to Section \ref{sec:grad_free}, the first-order condition for optimality is expressed as  a root-finding problem:  
$\nabla_\varqsa\|\clE^\varqsa\|^2 = 0$.   
%\andrey{This is under assumption that the objective function is convex, differentiable with unique minimum.}
Collecting together the definitions, we arrive at the following 
QSA steepest descent algorithm: 
\begin{equation}
\begin{aligned}
\ddt \varqsa (t) &= - a(t) \clE^{\varqsa(t)} (x(t),u(t)) \zeta^{\varqsa(t)}(t) 
\\
\zeta^\varqsa(t) & := \nabla_{\varqsa}\clE^{\varqsa} (x(t),u(t)) 
\end{aligned}
\label{e:Q}
\end{equation}
The vector process $\{\zeta^{\varqsa(t)}(t)\}$ is analogous to the \textit{eligibility vector} defined in TD-learning \cite{sze10,sutbar98,bertsi96a}.

\textit{Model-free realization.}   It appears from the definition \eqref{e:BEQ} that the nonlinear model must be known.   A model-free implementation is obtained on recognizing that for any parameter $\varqsa$,  and any state-input pair $(x(t),u(t))$, 
\begin{equation}
\begin{aligned}
\clE^\varqsa(x(t),u(t)) & = -Q^{\fb\theta} (x(t),u(t))+ \uQ^{\fb\theta} (x(t)) 
\\
&\qquad + c(x(t),u(t))  +  \ddt \uQ^{\fb\theta} (x(t))
\end{aligned} 
\label{e:BEonline}
\end{equation} 

\noindent
\textit{(Approximate) Policy improvement algorithm (PIA):}  Given a policy $\fb$ and  approximation   $Q^{\fb\theta^*}$, the policy is updated:
\begin{equation}\label{eq.policy.update}
\fb^+(x) =\argmin_u Q^{\fb\theta^*}(x,u)
\end{equation}
This procedure is repeated to obtain a recursive algorithm.

% and repeat iteratively policy evaluation~\eqref{e:Q} and policy improvement~\eqref{eq.policy.update}.

\subsection{Practical Implementation}
Given a basis of functions $\{\psi_i : 1\le i\le d\}$,  consider the linearly parameterized family
\begin{equation}
Q^{\fb\theta} (x,u) =  d(x,u) + \theta^\transpose \psi(x,u)\,,\quad \theta\in\Re^d. 
\label{e:Qtheta}
\end{equation}
%Given a parametrized family of functions $Q^{\fb,\varqsa}$,   the Bellman error as a function of $(x(t),u(t)) $ is now defined by
%\begin{equation}
%\begin{aligned}
%\clE^{\fb,\varqsa}(x(t),u(t))  &= -Q^{\fb,\varqsa}(x(t),u(t)) + \uQ^{\fb,\varqsa} (x(t)) 
%\\
%&\qquad + c(x(t),u(t))  +   \ddt   \uQ^\fb\, (x(t))
%\end{aligned}
%\label{e:BEQfixed}
%\end{equation}
% 
Note that the Bellman error is a linear function of $\varqsa$ whenever this is true of $Q^{\fb,\varqsa}$.    
Consequently, minimization of \eqref{e:ergodicBellman} is a model-free linear regression problem,  and the limit exists for any stable input. Moreover, the steepest descent algorithm~\eqref{e:Q} becomes linear. In fact, given~\eqref{e:Qtheta}, we define
\begin{align*}
\zeta(t)  &:= \left[\psi(x(t),\fb(x(t))) - \psi(x(t),u(t)) +\right.\\
&\left.\quad \ddt \psi(x(t),\fb(x(t))) \right]\\
b(t)  &:= \left[ c(x(t), u(t)) - d(x(t), u(t)) + d(x(t), \fb(x(t)))\right. \\
&\left.\quad + \ddt d(x(t), \fb(x(t))) \right] 
\end{align*}
Then $\clE^{\fb,\varqsa}(x(t),u(t)) = b(t)+\zeta(t)^\top \theta$, and~\eqref{e:Q} becomes
\begin{equation}
\begin{aligned}
\ddt \varqsa (t) &= - a(t)\left[\zeta(t)^\top \,\varqsa (t) + b(t)  \right]\zeta(t)\\
\end{aligned}
\label{e:Q:fixed:policy}
\end{equation}
The convergence of~\eqref{e:Q:fixed:policy} may be very slow if the matrix 
\begin{equation}
G := \lim_{t\to\infty} \frac{1}{t}\int_{0}^t \zeta(\tau)\zeta(\tau)^\top \mathrm d \tau
\label{e:G}
\end{equation}
is poorly conditioned (i.e., has some eigenvalues close to zero). Note that using $G^{-1}$ as a matrix gain could solve this problem. The integral~\eqref{e:G} can be estimated from data. This suggests an intuitive two step procedure for the steepest descent algorithm~\eqref{e:Q:fixed:policy}
\begin{subequations}\label{e:fixed:q:matrix}
\begin{align} 
\haG_t &= \frac{1}{t}\int_{0}^t \zeta(\tau)\varrho(\tau)^\top \mathrm d \tau,\quad  0\le t\le T \\
\ddt \varqsa (t) &= - a(t)\haG_T^{-1} \left[\zeta(t)^\top \,\varqsa (t) + b(t)  \right]\zeta(t),\,  t\ge T 
\label{e:Q:fixed:policy:matrix}
\end{align}
\end{subequations}
The results in Section~\ref{s:QSA} suggest that this is indeed a good idea in order to achieve the optimal convergence rate   $\mathcal O (1/t) $.  

\subsection{Numerical example} 
% \textcolor{blue}{Marcello: The way I actually had implemented is \textbf{with} the quadratic part of $u$, this is because then we do not need to know $c(x,u)$ either but we read it as an incurred cost. I understand that this way I cannot guarantee that the minimization step is well defined. For the time being I write it this way and I will change the code (or the text) later }
Consider the LQR problem in which $\dyn(x,u) = Ax+Bu$,  and $c(x,u) = x^\top Mx + u^\top Ru$, with $(A,B)$ controllable, $M\ge 0$ and $R>0$.   Given the known structure of the problem, we know that the function $Q^\phi$ associated with any linear policy $\phi(x) = Kx$, takes the form
\[
Q^\phi = \begin{bmatrix} x \\ u  \end{bmatrix}^\top \left(
\begin{bmatrix} M & 0\\0 & R  \end{bmatrix}
 + 
 \begin{bmatrix} A^\top P + P A  + P   & PB \\
 B^\top P & 0  \end{bmatrix}
\right)
\begin{bmatrix} x \\ u  \end{bmatrix},
\]
where $P$ solves the Lyapunov equation $A^\top P + PA + K^\top R K + Q = 0$ and therefore lies within the parametric class \eqref{e:Qtheta} in which $d(x,u) = c(x,u)$ and each $\psi_i$ is a quadratic function of $(x,u)$. For example,  for the special case $n=2$ and $m=1$, we can take the quadratic basis
\[
\{\psi_1,\dots,\psi_6\}
=
\{
x_1^2,x_2^2, x_1 x_2, x_1 u, x_2 u, u^2\}
\]

%We could omit the $u^2$ term in the parametrization. When we do so, however, we observe numerical instabilities in the algorithm (the explanation is not yet clear). 
In order to implement the  algorithm~\eqref{e:Q:fixed:policy:matrix} we begin with selecting an input of the form
\begin{align}\label{e:offpolicy}
u(t) = K_0 x(t) + \xi(t)
\end{align}
where $K_0$ is a stabilizing controller and $\xi(t) =\sum_{j=1}^q a_j \sin(\omega_j t + \phi_j)$. Note that $K_0$ need not be the same $K$ whose value function we are trying to evaluate. 

\spm{Say at the start that we are only looking for  a stationary point in general, but here we end up with a simple quadratic objective, no?
\\
Then we run~\eqref{e:Q:fixed:policy:matrix} to obtain $\varqsa^\star$ which minimizes the Bellman error~\eqref{e:ergodicBellman}. 
}
\begin{figure}[thb]
\begin{center}
\input{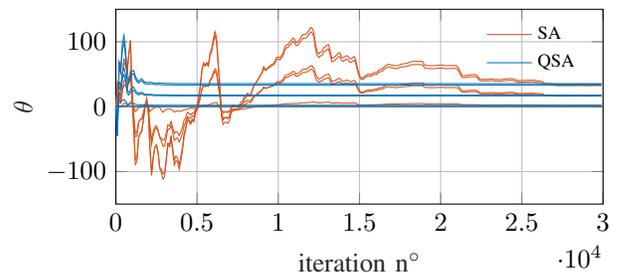}
\caption{\small    Comparison of QSA and Stochastic Approximation (SA) for policy evaluation. It is observed that QSA converges significantly faster.
%This is in line with the theoretical results presented in Section~\ref{s:QSA}. 
}
\label{fig.qsa.PI}
\end{center}
\vspace{-1.5em}
\end{figure}

The algorithm was tested on the simple LQR example where the system is a double integrator with friction
\begin{equation}\label{e:lin}
\dot x = \begin{bmatrix} 0 & -1\\0 & -0.1 \end{bmatrix} x + \begin{bmatrix}0\\1\end{bmatrix} u, \quad M = I, \quad R = 10\, I .
\end{equation}
Figure~\ref{fig.qsa.PI} shows the evolution of the QSA algorithm for the evaluation of the policy $K=[-1,0]$ using the stabilizing controller $K_0 = [-1,-2]$ and $\xi$ in~\eqref{e:offpolicy} as the sum of 24 sinusoids with random phase shifts and whose frequency was sampled uniformly between $0$ and $50$ rad/s. The QSA algorithm is compared with the related SA algorithm in which $\xi$ is ``white noise'' instead of a deterministic signal (formalized as an SDE). For implementation, both~\eqref{e:fixed:q:matrix} and the linear system~\eqref{e:lin} were discretized with forward Euler discretization; time-step of 0.01s.

\begin{figure}[thb]
\begin{center}
% This file was created by matlab2tikz.
%
%The latest updates can be retrieved from
%  http://www.mathworks.com/matlabcentral/fileexchange/22022-matlab2tikz-matlab2tikz
%where you can also make suggestions and rate matlab2tikz.
%
\definecolor{mycolor1}{rgb}{0.00000,0.44700,0.74100}%
\begin{tikzpicture}

\begin{axis}[%
width=0.8\columnwidth,
height=0.25\columnwidth,
at={(0,0)},
scale only axis,
xmin=1,
xmax=6,
xlabel style={font=\color{white!15!black}},
xlabel={Policy improvement $n^{\circ}$},
ymin=0,
ymax=11,
ylabel style={font=\color{white!15!black}},
ylabel={$\|K-K^{\star}\| / \|K^{\star}\|$},
axis background/.style={fill=white},
xmajorgrids,
ymajorgrids
]
\addplot [color=mycolor1, 
line width=2pt,   
mark size=2.5pt,
mark=x,
mark options={solid},
forget plot]
  table[row sep=crcr]{%
1	10.4796828118461\\
2	4.82249144332938\\
3	2.02892096522203\\
4	0.700134726208808\\
5	0.151171489672945\\
6	0.00974789234373565\\
};
\end{axis}

\end{tikzpicture}%
\caption{\small Iterations of the policy improvement algorithm (PIA)~\eqref{eq.policy.update} where each evaluation is performed by the model-free algorithm~\eqref{e:fixed:q:matrix}. We observe that the PIA algorithm indeed converges to the optimal controller $K^\star$.}
\vspace{-1.5em}
\label{fig.PIA}
\end{center}
\end{figure}
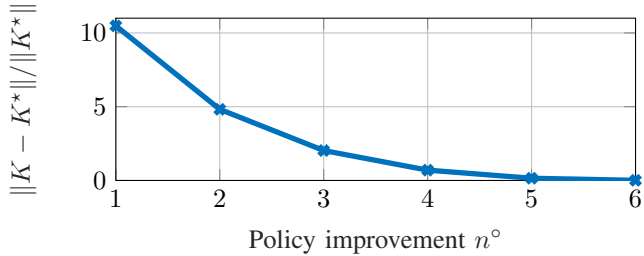
In Figure~\ref{fig.PIA} we show the distance of the iterates of the policy improvement algorithm~\eqref{eq.policy.update} and the optimal controller $K^\star$ (which can be easily computed for an LQR problem). Each policy evaluation performed by the model-free algorithm~\eqref{e:fixed:q:matrix}.

\section{Convergence Analysis}
\label{s:QSA}

The extension of stability and convergence results from the classical stochastic model 
\eqref{eq:SA} to the deterministic analog \eqref{eq:algo} requires some specialized analysis since the standard methods are not directly applicable.  In particular,  the first step in \cite{bor08a} and other references is to 
write \eqref{eq:SA} in the form,
\[
\varqsa_{n+1} = \varqsa_n + a_n \bigl(\barh(\varqsa_n) + M_n\bigr) \, ,
\]
where $\bfmM$ is a martingale difference sequence (or a perturbation of such a sequence).  This is possible when $\bfmW$ is i.i.d.,  or for certain Markov $\bfmW$. %  \cite{mamakshw90}.    
A similar transformation is not possible for any class of deterministic $\bfxi$.

%%%%%%%%%%%%%%%%%%%% 

\subsection{Assumptions for convergence}
\label{ss:algorithmQSA}

% In the ODE approach for analysis of stochastic approximation algorithms,   a continuous time trajectory $\bfvarqsa$ is obtained from the sequence defined in \eqref{eq:SA} through linear interpolation:
% The $n^{th}$ iterate $\varqsa_n$ in the recursion corresponds to a point $\varqsa(u_n)$ in the interpolated trajectory, where
% \[
% 	u_n = \sum_{i=0}^n a_i.
% \]
% The piecewise linear trajectory  $\bfvarqsa$   is then compared with $\bfvarode$ that is defined in \eqref{eq:avgODE}.  

As in standard analysis of SA, the starting point is a temporal transformation:
substitute in \eqref{eq:algo} the new time variable given by
\[
	u= g(t) \eqdef  \int_0^t a(r)\, dr,\qquad t\ge 0.
\] 
The time-scaled process is then defined by  
\begin{equation}
\varscaled(u) \eqdef  \varqsa(g^{-1}(u)).
\label{e:varscaled}
\end{equation}  
For example, if $a(r)=(1+r)^{-1}$, then 
\begin{equation}
u=\log(1+t) \ \
	\text{and} \ \ 
\xi(g^{-1}(u))) = \xi(e^u-1).
\label{e:a-r-inv}
\end{equation}

The chain rule of differentiation gives 
\begin{equation*}
\ddu\varqsa(g^{-1}(u)) = f(\varqsa(g^{-1}(u)),\xi(g^{-1}(u))).
\end{equation*}  
That is,  the time-scaled process solves the ODE,
\begin{equation}
\label{eq:algoscaled}
\ddu\varscaled(u) = f(\varscaled(u),\xi(g^{-1}(u))).
\end{equation}
The two processes $\bfvarqsa$ and $\bfvarscaled$ differ only in time scale, and hence, proving convergence of one  proves that of the other. For the remainder of this section we will deal exclusively with $\bfvarscaled$;  it is on the `right' time scale for comparison with  $\bfvarode$, the solution of \eqref{eq:avgODE}.

\noindent
\textbf{Assumptions:}
\begin{romannum} %so we can control spacing

\item[(A1)] The system described by equation \eqref{eq:avgODE} has a globally asymptotically stable equilibrium at $\varqsa^*$.

\item[(A2)] There exists a continuous function $V:\Re^d\rightarrow\Re_+$ and a constant $c_0>0$ such that, for any initial condition $\varode(0)$ of
\eqref{eq:avgODE}, and
any $0\le T\le1$,
the following bounds hold whenever
$\|\varode(s)\|>c_0$,
\[
V(\varode(s+T)) - V(\varode(s)) \le -T\|\varode(s)\|.
\]

%\andy{Remove "there exists"...}

% There exists a function $f:\Re^d\times\Re^m\rightarrow\Re^d$ and a process $\{\xi(t)\}_{t\ge0}$ that takes values in a compact set $\Omega\subset\Re^m$ 

\item[(A3)]  There exists a constant $b_0<\infty$, such that for all $\varqsa\in\Re^d$, $T>0$,
%\notes{$b_0$ first to emphasize it is independent of theta}
\[ 
\left\|\frac{1}{T}\int_0^T f(\varqsa,\xi(t))\, dt - \bar{f}(\varqsa)\right\| \le \frac{b_0}{T}(1+\|\varqsa\|) 
\]

\item[(A4)] There exists a constant $L <\infty$ such that the functions $V$, $\bar{f}$ and $f$ satisfy the following Lipschitz conditions:
\begin{align*}
\|V(\varqsa') - V(\varqsa)\| &\le L \|\varqsa' - \varqsa\|, 
\\
\|\bar{f}(\varqsa') - \bar{f}(\varqsa)\| &\le L\|\varqsa' - \varqsa\|, 
\\
\|f(\varqsa',\xi) - f(\varqsa,\xi)\| &\le L\|\varqsa' - \varqsa\|,\quad \varqsa', \, \varqsa\in\Re^d\,, \ \xi\in\Re^m
\end{align*}
 
\item[(A5)] The process $\bfma$ is non-negative and monotonically decreasing, and as $t\rightarrow\infty$, 
\[
a(t)\downarrow 0,\qquad\int_0^t a(r)\, dr \rightarrow \infty.
\]
\end{romannum}
Assumption (A1) determines uniquely  the possible limit point of the algorithm.  Assumption (A2) ensures that there is a Lyapunov function $V$ with a strictly negative drift whenever $\bfvarode$ escapes a ball of radius $c_0$. This assumption is used to establish boundedness of the trajectory $\bfvarscaled$. Assumptions (A3) and (A4) are technical requirements essential to the proofs:
 (A3) is only slightly stronger than ergodicity of $\bfxi$ as given by \eqref{eq:ergodic}, while (A4) is necessary to control the growth of the respective functions. The process $\bfma$ in (A5) is a continuous time counterpart of the standard step size schedules in stochastic approximation, except that we impose monotonicity in place of square integrability.

\noindent \emph{Verifying (A2) for a linear system.}  
Consider the ODE \eqref{eq:avgODE} in which $\barf(x) = Ax$ with $A$ a Hurwitz $d\times d$ matrix.   There is a quadratic function $V_2(x) =   x^\transpose P x$  satisfying the Lyapunov equation $PA +A^\transpose P = -I$, with $P>0$.  
% Consequently, solutions to \eqref{eq:avgODE}  satisfy
% \[
% \ddt V_2(\varode(t))  = -\| \varode(t)\|^2
% \]
The function $V=k\sqrt{V_2}$, where the constant $k>0$ is chosen suitably large,
% \[
% \ddt V(\varode(t))  =     
% - \frac{k}{2}  \frac{1}{\sqrt{V_2(\varode(t)) } } \| \varode(t)\|^2   
% \]
 is a Lipschitz solution to (A2) for some finite $c_0$.  
% For $k>0$ sufficiently large we obtain
% \[
% \ddt V(\varode(t))  
%          \le  -\| \varode(t)\|
% \]
% and hence this $V$ is a Lipschitz solution to (A2).
% , for  $c_0>0$ sufficiently large.   \spm{I need to give this a tweak}
\qed 
 
 \subsection{Convergence}

%The QSA algorithm is consistent under these assumptions:

The following is our main convergence result. 

\begin{theorem}
\label{t:convergence} 
Under Assumptions (A1)--(A5),  the solution to \eqref{eq:algo} converges to $\varqsa^*$ for each initial condition. 
\end{theorem}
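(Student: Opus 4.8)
The plan is to work throughout with the time‑scaled process $\bfvarscaled$ solving \eqref{eq:algoscaled}, since by the remark following \eqref{eq:algoscaled} convergence of $\bfvarscaled$ to $\varqsa^*$ is equivalent to convergence of $\bfvarqsa$. The argument splits into two parts, as is standard for ODE‑method proofs: (i) establish that the trajectory $\{\varscaled(u):u\ge 0\}$ is bounded; and (ii) show that a bounded solution of \eqref{eq:algoscaled} converges to the globally asymptotically stable equilibrium $\varqsa^*$ of the averaged ODE \eqref{eq:avgODE}. The key technical device in both parts is a comparison, on each unit time interval $[u,u+1]$, between $\bfvarscaled$ and the solution $\bfvarode$ of the averaged ODE started from the same point, with the discrepancy controlled by (A3) together with Grönwall‑type estimates coming from the Lipschitz bounds (A4).

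First I would prove boundedness. Fix a unit interval $[s,s+1]$ and let $\bfvarode$ solve \eqref{eq:avgODE} with $\varode(s)=\varscaled(s)$. Writing the integrated forms of \eqref{eq:algoscaled} and \eqref{eq:avgODE} and subtracting, one gets for $s\le t\le s+1$
\[
\varscaled(t)-\varode(t)
=\int_s^t\!\bigl[f(\varscaled(r),\xi(g^{-1}(r)))-f(\varode(r),\xi(g^{-1}(r)))\bigr]dr
+\int_s^t\!\bigl[f(\varode(r),\xi(g^{-1}(r)))-\barf(\varode(r))\bigr]dr .
\]
The first integral is bounded by $L\int_s^t\|\varscaled(r)-\varode(r)\|\,dr$ by (A4). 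The second is the ``averaging error'': after changing variables back to the original time scale via $u=g(t)$ and using $a(t)\downarrow 0$ (A5), (A3) yields a bound of the form $\varepsilon(s)(1+\sup_{[s,s+1]}\|\varode\|)$ where $\varepsilon(s)\to 0$ as $s\to\infty$ — this is precisely where the $1/T$ in (A3) combined with the diverging, decreasing gain pays off. Grönwall then gives $\sup_{t\in[s,s+1]}\|\varscaled(t)-\varode(t)\|\le e^{L}\varepsilon(s)(1+\sup_{[s,s+1]}\|\varode\|)$. Combining this with the strict Lyapunov drift of (A2) for $\bfvarode$ — namely $V(\varode(s+1))-V(\varode(s))\le-\|\varscaled(s)\|$ whenever $\|\varscaled(s)\|>c_0$ — and the Lipschitz bound on $V$ in (A4), one obtains a discrete inequality of the shape $V(\varscaled(s+1))\le V(\varscaled(s))-\|\varscaled(s)\|+L e^{L}\varepsilon(s)(1+\text{local bound})$; since the ODE solution cannot grow faster than exponentially over a unit interval, the ``local bound'' is itself controlled by a constant times $\|\varscaled(s)\|$, so for $s$ large enough the negative drift dominates whenever $\|\varscaled(s)\|$ exceeds a fixed threshold. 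A standard argument then shows $\sup_u\|\varscaled(u)\|<\infty$.

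Given boundedness, part (ii) is the usual ODE‑method limit argument adapted to the deterministic setting. Boundedness plus (A4) makes $\{\varscaled(u+\cdot):u\ge 0\}$ equicontinuous and uniformly bounded on compacts, so along any sequence $u_k\to\infty$ a subsequential uniform limit $\varscaled^\infty$ exists; the comparison estimate above, with $\varepsilon(s)\to0$, shows that $\varscaled^\infty$ is a bona fide solution of the averaged ODE \eqref{eq:avgODE} on all of $\Re$, and it is bounded. By (A1), the only bounded entire solution is the constant $\varqsa^*$ (more precisely, any bounded trajectory of \eqref{eq:avgODE} must be attracted to $\varqsa^*$, and a recurrent limit trajectory is forced to equal $\varqsa^*$ identically). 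A short additional argument using the Lyapunov function $V$ along \eqref{eq:avgODE} — $V$ is nonincreasing along limit trajectories and strictly decreasing off $\{\varqsa^*\}$ — upgrades subsequential convergence of $\{\varscaled(u_k)\}$ to full convergence $\varscaled(u)\to\varqsa^*$, hence $\varqsa(t)\to\varqsa^*$.

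The main obstacle is part (i): making the boundedness argument genuinely rigorous without assuming a priori bounds. The delicate point is that the averaging error in (A3) is multiplied by $(1+\|\varode\|)$, and $\|\varode\|$ over the interval is a priori only controlled in terms of $\|\varscaled(s)\|$ itself, so one must be careful that the error term does not swamp the Lyapunov decrease $-\|\varscaled(s)\|$ — this forces the comparison to be done on short (unit, or shorter) intervals and requires tracking how the Lipschitz constant $L$, the interval length, and the decaying factor $\varepsilon(s)$ interact, exactly the kind of bookkeeping that the assumptions (A2)–(A5) were tailored to support. I expect the remaining steps (equicontinuity, extraction of limits, identification of the limit ODE, and the Lyapunov upgrade to full convergence) to be routine.
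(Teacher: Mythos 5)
Your part (i) is essentially the paper's own argument: the paper proves ultimate boundedness (Proposition \ref{t:stability}) by exactly the route you sketch --- freeze the argument of $f$ on short subintervals, use (A3) together with an integration-by-parts in the original time scale and the monotone decreasing gain to get a vanishing averaging error $\epsy_f(u)\to 0$ (Lemma \ref{thm:LLN}), apply Gr\"onwall and the Lipschitz bounds (A4) to compare $\bfvarscaled$ with $\bfvarode^u$ over an interval of length $T\le 1$ (Lemma \ref{thm:growthbnd}), and then feed this into the Lyapunov drift of (A2) to obtain the discrete drift inequality $V(\varscaled(u+T))-V(\varscaled(u))\le -\tfrac{T}{2}\|\varscaled(u)\|$ for large $u$ and large $\|\varscaled(u)\|$, which yields ultimate boundedness via an absorbing-set argument. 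Your bookkeeping concerns (the $(1+\|\varqsa\|)$ factor versus the $-T\|\varscaled(u)\|$ decrease, the choice of $T$ small and then $u_0$ large) are precisely the ones the paper resolves.

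Where you diverge is part (ii). The paper does \emph{not} pass to subsequential limits and identify the limit set: it uses the boundedness to note that the initial conditions $\varode^u(u)=\varscaled(u)$ eventually lie in a fixed ball, invokes uniformity of the attraction time $\tau_\epsy$ over that ball (a consequence of (A1)), and combines this with the uniform coupling estimate $\sup_{v\in[0,T]}\|\varscaled(u+v)-\varode^u(u+v)\|\to 0$ of Lemma \ref{thm:limsuplimsup} via a single triangle inequality at time $u+\tau_\epsy$. This is shorter and sidesteps limit-set theory entirely. Your route (equicontinuity, extraction of a limit trajectory of the averaged ODE, elimination of nonconstant bounded limit trajectories) is the standard Bena\"im/Borkar alternative and does work, but as written it has a soft spot: you propose to finish with ``$V$ is nonincreasing along limit trajectories and strictly decreasing off $\{\varqsa^*\}$,'' whereas (A2) only guarantees a strict decrease of $V$ \emph{outside} the ball of radius $c_0$, so this particular $V$ cannot by itself rule out a nontrivial invariant limit set inside that ball. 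To close this you would need either the internal chain-transitivity characterization of limit sets (for a GAS equilibrium the only internally chain transitive invariant set is $\{\varqsa^*\}$) or a converse-Lyapunov function that is strict everywhere --- or simply the paper's uniform-attraction-time argument, which avoids the issue.
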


% Assumption (A1) determines uniquely  the possible limit point of the algorithm.  Assumption (A2) ensures that there is a Lyapunov function $V$ with a strictly negative drift whenever $\bfvarode$ escapes a ball of radius $c_0$. This assumption is used to establish boundedness of the trajectory $\bfvarscaled$. Assumptions (A3) and (A4) are technical requirements essential to the proofs:
% (A3) is only slightly stronger than ergodicity of $\bfxi$ as given by \eqref{eq:ergodic}, while (A4) is necessary to control the growth of the respective functions. The process $\bfma$ in (A5) is a continuous time counterpart of the standard step size schedules in stochastic approximation, except that we impose monotonicity in place of square integrability.

% The extremum seeking algorithm \eqref{e:ES} satisfies (A4)  if $J$ is Lipschitz continuous.

% Boundedness of the trajectory $\bfvarscaled$ for each initial condition is the first step in the proof of convergence.  It is lengthy, but follows along arguments similar to those used in the classical SA literature.  Details can be found in   \cite{berchecoldalmeymeh19}.   

Define $\varode^u(w)$, $w\ge u$, to be the unique solution to \eqref{eq:avgODE} `starting' at $\varscaled(u)$:
\begin{equation}
\label{eq:init}
\ddw\varode^u(w) = \bar{f}(\varode^u(w)),\, \, w\ge u,  \, \, \varode^u(u) = \varscaled(u).
\end{equation}
The following result is required to prove Theorem \ref{t:convergence}.

 \begin{lemma}
\label{thm:limsuplimsup}
Under the assumptions of  \Theorem{t:convergence}, 
for any $T>0$,  as $t\to\infty$,
\[  \sup_{v\in[0,T]}\Bigl\|
		\int_u^{u+v} \bigl[ f(\varscaled(w),\xi(g^{-1}(w))) - \bar{f}(\varscaled(w) ) \bigr]\, dw
							\Bigr \| \to 0
\]
and as $u\to \infty$,  
$
 \sup_{v\in[0,T]}\|\varscaled(u+v) - \varode^u(u+v)\| \to 0. 
$ \qed
\end{lemma}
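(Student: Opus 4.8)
The plan is to derive the second assertion from the first by a Gr\"onwall argument, so the substantive step is the first (averaging) estimate. Throughout I invoke boundedness of $\bfvarscaled$, which has been established from (A1)--(A2) prior to this lemma: fix $B<\infty$ with $\sup_{u\ge 0}\|\varscaled(u)\|\le B$. Because $f$ is Lipschitz in its first argument (A4) and the exogenous signal $\bfxi$ is bounded, the right-hand side of \eqref{eq:algoscaled} is uniformly bounded, say $\|f(\varscaled(u),\xi(g^{-1}(u)))\|\le D$ for all $u$, so that $\bfvarscaled$ is $D$-Lipschitz on the $u$-scale. I also note that $t\to\infty$ is equivalent to $u=g(t)\to\infty$ (since $g(t)=\int_0^t a(r)\,dr\uparrow\infty$ by (A5)), and that $a(g^{-1}(u))\downarrow 0$ as $u\to\infty$ because $a(\cdot)\downarrow 0$.

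For the first assertion, fix $T>0$ and set $\Gamma(p,q):=\int_p^q[f(\varscaled(w),\xi(g^{-1}(w)))-\bar f(\varscaled(w))]\,dw$; this is additive in the interval and satisfies $\|\Gamma(p,q)\|\le C_1|q-p|$ for a constant $C_1$ depending only on $D$, $L$, $B$. Partition $[u,u+v]$ (with $v\le T$) into subintervals of length $\delta>0$ plus one shorter remainder. On a piece $[s,s+\delta]$ I freeze $\varscaled$ at the left endpoint: replacing $\varscaled(w)$ by $\varscaled(s)$ inside $\Gamma$ costs at most $LD\delta^2$, by the $D$-Lipschitz bound together with the Lipschitz continuity (A4) of $f$ and $\bar f$. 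For the frozen integral $\int_s^{s+\delta}[f(\varscaled(s),\xi(g^{-1}(w)))-\bar f(\varscaled(s))]\,dw$ I substitute $w=g(t)$, $dw=a(t)\,dt$, and integrate by parts against the monotone gain $a$: with $\psi(t):=\int_0^t[f(\varscaled(s),\xi(r))-\bar f(\varscaled(s))]\,dr$, Assumption (A3) supplies the crucial uniform bound $\|\psi(t)\|\le b_0(1+B)$ for all $t$, and monotonicity of $a$ (A5) then makes the frozen integral bounded by $2b_0(1+B)\,a(g^{-1}(s))\le 2b_0(1+B)\,a(g^{-1}(u))$. Summing over the at most $T/\delta$ full pieces and the remainder,
\[
\sup_{v\in[0,T]}\|\Gamma(u,u+v)\|\ \le\ \frac{T}{\delta}\,\bigl[2b_0(1+B)\,a(g^{-1}(u))+LD\delta^2\bigr]+C_1\delta .
\]
Given $\epsy>0$, first choose $\delta$ small enough that $LDT\delta+C_1\delta<\epsy/2$, and then, with $\delta$ fixed, choose $u$ large enough that $\tfrac{2Tb_0(1+B)}{\delta}\,a(g^{-1}(u))<\epsy/2$; this proves the first claim.

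For the second assertion, subtract the integrated forms of \eqref{eq:algoscaled} and \eqref{eq:init}, using $\varode^u(u)=\varscaled(u)$, to get
\[
\varscaled(u+v)-\varode^u(u+v)=\Gamma(u,u+v)+\int_u^{u+v}\bigl[\bar f(\varscaled(w))-\bar f(\varode^u(w))\bigr]\,dw .
\]
Write $\eta_T(u):=\sup_{v\in[0,T]}\|\Gamma(u,u+v)\|$, which tends to $0$ by the first part, and $\Delta(v):=\sup_{r\in[0,v]}\|\varscaled(u+r)-\varode^u(u+r)\|$. The Lipschitz bound (A4) on $\bar f$ gives $\Delta(v)\le \eta_T(u)+L\int_0^v\Delta(r)\,dr$ for $v\in[0,T]$, and Gr\"onwall's inequality yields $\sup_{v\in[0,T]}\|\varscaled(u+v)-\varode^u(u+v)\|=\Delta(T)\le \eta_T(u)\,e^{LT}\to 0$ as $u\to\infty$. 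I expect the averaging estimate to be the main obstacle: (A3) cannot be applied directly because $\varscaled$ drifts along the interval, and the remedy -- freezing $\varscaled$ on short sub-intervals, passing to the original time variable via $w=g(t)$, and integrating by parts against the decreasing gain -- must be balanced so that the number of sub-intervals ($\sim T/\delta$, fixed once $\delta$ is fixed) times the per-interval averaging error ($O(a(g^{-1}(u)))$, vanishing as $u\to\infty$) still tends to zero, which dictates the order of the quantifiers (choose $\delta$, then $u$). The remaining ingredients -- boundedness of $\bfvarscaled$ and the Gr\"onwall step -- are routine.
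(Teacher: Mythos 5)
Your proof is correct and follows essentially the same route as the paper: boundedness of $\bfvarscaled$, a partition-and-freeze argument on subintervals of length $\delta$ combined with the uniform bound from (A3) and integration by parts against the monotone gain $a$ (this is the paper's Lemma~\ref{thm:LLN}/Lemma~\ref{thm:limsup1}), followed by a Gr\"onwall step for the second assertion (the paper's Lemma~\ref{thm:limsup}). The only difference is organizational—you inline the change of variables and integration by parts rather than isolating it as a separate lemma—and your quantifier ordering (first $\delta$, then $u$) matches the paper's limsup argument.
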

The proof of \Lemma{thm:limsuplimsup}
is contained in the Appendix; 
the second limit is similar to Lemma~1 in Chapter~2 of \cite{bor08a}.

\subsubsection*{Proof of \Theorem{t:convergence}}

The first step in the proof is to establish ultimate boundedness of $\varscaled(u)$:
there exists $b < \infty$ such that for each $\varqsa\in\Re^d$,  there is a $T_\varqsa$ such that
\[
	\|\varscaled(u)\|\le b \textrm{\ \ for all } u\ge T_\varqsa\,,
    \  \varscaled(0) = \varqsa
\]
The (lengthy) proof is contained in the Appendix; see Proposition \ref{t:stability} there. 

Thus, for $u\ge T_\varqsa$, $\|\varode^u(u)\| = \|\varscaled(u)\|\le b$. By the definition of global asymptotic convergence, for every $\epsy>0$, there exists a $\tau_\epsy>0$, independent of the value $\varode^u(u)$, such that 
$
	\|\varode^u(u+v) - \varqsa^*\| < \epsy \textrm{\ \ for all }v\ge \tau_\epsy
	$.
  \Lemma{thm:limsuplimsup} gives,
\begin{align*}
	\limsup_{u\rightarrow\infty}\|\varscaled(u+&\tau_\epsy) - \varqsa^*\| 
	\\
	&\le \limsup_{u\rightarrow\infty}\|\varscaled(u+\tau_\epsy)-\varode^u(u+\tau_\epsy)\| 
	\\
	&\qquad + \limsup_{u\rightarrow\infty}\|\varode^u(u+\tau_\epsy) - \varqsa^*\| 
 \le \epsy. 
\end{align*}
Since $\epsy$ is arbitrary, we have the desired limit.
% \[
% 	\lim_{u\rightarrow\infty} \|\varscaled(u) - \varqsa^*\|  =\lim_{u\rightarrow\infty} \|\varscaled(u+\tau_\epsy) - \varqsa^*\|  = 0.
% \] 
\qed

%------------------------------------------------------------------

\subsection{Variance} 
 
Let $\tilvarqsa (t) \eqdef \varqsa(t) - \varqsa^* $ and $\nu(t)=(t+1)\tilvarqsa(t)$. 
This section is devoted to  providing conditions under which $\bfnu$ is bounded, and there is a well defined covariance: 
\begin{equation}
\barSigma_\varqsa\eqdef \lim_{T\to\infty}  \frac{1}{T} \int_0^T  \nu(t)\nu(t)^\transpose \, dt.  
\label{e:cov}
\end{equation}
Analysis requires  additional assumptions on the ``noise'' process.   It is also assumed that the model is linear and stable:   
\begin{romannum}
\item[(A6)]  
The function $f$ is linear,  $f(\varqsa,\xi) = A\varqsa + \xi$, the gain is  $a(t)=1/(t+1)$, and   
\begin{romannum}
\item
$A$ is Hurwitz, and each eigenvalue $\lambda(A)$ satisfies   $
\Real(\lambda) < -1$.
  
\item  The function of time $\bfxi$ is bounded, along with its partial integrals, denoted
\[
\begin{aligned}
\xi^I(t)& = \int_0^t \xi(r)\, dr,  \qquad \xi^{I\!I}(t) = \int_0^t \xi^I(r)\, dr.
% \\
% \xi^{I\!I}(t)& = \int_0^t \xi^I(r)\, dr.
\end{aligned}
\]
\end{romannum}
\end{romannum} 

Assumption (A6) implies that $\barf(\varqsa)= A\varqsa$, so that $\varqsa^*=0$.
The linearity assumption is typical in much of the literature on variance for stochastic approximation \cite{kontsi04,kusyin97,bor08a}.  As in the SA literature, it is likely that the results of this section can be extended to nonlinear models via a Taylor-series approximation.
 
A typical example of Assumption (A6ii) is the case where  the entries of $\bfxi$ can be expressed as a sum of sinusoids: 
\begin{equation} 
\xi (t) = \sum_{i=1}^K  v^i   \sin ( \phi_i + \omega_i t  )
\label{eq:InputQSA}
\end{equation}  
for fixed vectors $\{v^i\}$, phases $\{\phi_i\}$,  and  frequencies   $\{\omega_i\}$.   

  \Theorem{t:var} below implies that $\| \nu(t) - \xi^I(t)\|\to 0$,
as $t\to\infty$.  Consequently, the error covariance   exists whenever there is a covariance for $\bfxi^I$: 
\[
\barSigma_\varqsa 
=
\Sigma_{\xi^I} \eqdef  \lim_{T\to\infty}  \frac{1}{T} \int_0^T  \xi^I(t){\xi^I(t)}^\transpose \, dt.  
\]
This is easily computed for the special case \eqref{eq:InputQSA}.

Let $\barA \eqdef I+A$ and 
fix a constant $\epsy_S$  satisfying $0<\epsy_S< -\Real(\barlambda)$ for each eigenvalue $\barlambda$ of $\barA$; this is possible due to Assumption (A6i).  
Associated with the ODE $\ddt x(t) = (1+t)^{-1} \barA x(t)$ is the \textit{state transition matrix}:
 \begin{equation}
S(t;r) = \exp\Bigl(\log\Bigl[ \frac{1+t}{1+r} \Bigr ]\barA\Bigl) \,,\quad r,t\ge 0.
\label{e:stateTrans}
\end{equation}  
It is easily shown that it satisfies the defining properties
\begin{equation}
S(t;t)=I\,,\quad  \ddt S(t;r) = \frac{1}{t+1} \barA  S(t;r)\,,\quad r,t\ge 0.
\label{e:STM}
\end{equation}

\begin{theorem}
\label{t:var}
Suppose Assumptions (A1)--(A6) hold.  Then,  
for each initial condition $\varqsa(0)$, 
\begin{equation}
\tilvarqsa(t)  =   \frac{1}{t+1} \Bigl[ \xi^I(t) 
+S(t;0) \tilvarqsa(0)\Bigr]
	+O\Bigl( \frac{1}{(t+1)^{1+\delta_S}}\Bigr), 
    \label{e:varSimple}
\end{equation}
where $\delta_S=\min(\epsy_S,1)$,  and the final error term is independent of the initial condition $\tilvarqsa(0)$. 
%and 
%\begin{equation}
%S(t;r) = \exp\Bigl(\log\Bigl[ \frac{1+t}{1+r} \Bigr ]\barA\Bigl) \,,\quad r,t\ge 0.
%\label{e:stateTrans}
%\end{equation} 
Consequently,  the scaled error process satisfies the bound
\begin{equation}
    \nu(t)   =  \xi^I(t)  +O\Bigl(  \frac{1+\|\tilvarqsa(0)\|}{(t+1)^{\delta_S}}\Bigr). 
% \begin{aligned}
% % \varqsa(t) & = \varqsa^*
% % +  \frac{1}{t+1}  \xi^I(t) 
% % 	+O\Bigl(  \frac{1}{(t+1)^{1+\delta_S}}\Bigr) 
% %     \\[.5em]
% \end{aligned}
\label{e:BestErrorEvolutionFormula}
\end{equation}   
\qed
\end{theorem}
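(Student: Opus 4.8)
The plan is to solve the linear ODE \eqref{eq:algo} in closed form by variation of constants and then peel off the leading term $(1+t)^{-1}\xi^I(t)$ by integrating by parts against the iterated integrals of the noise. Under (A6) the recursion reads $\ddt\varqsa(t)=(1+t)^{-1}[A\varqsa(t)+\xi(t)]$, and since $\varqsa^*=0$ we have $\tilvarqsa=\varqsa$. The homogeneous equation $\ddt x=(1+t)^{-1}Ax$ has fundamental matrix $\exp(\log(1+t)\,A)$, hence state-transition matrix $\Phi(t;r)=\exp\bigl(\log[\tfrac{1+t}{1+r}]\,A\bigr)$; writing $A=\barA-I$ with $\barA$ commuting with $I$ gives the identity $\Phi(t;r)=\tfrac{1+r}{1+t}\,S(t;r)$, with $S$ as in \eqref{e:stateTrans}. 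Variation of constants for the full equation, whose forcing is $(1+r)^{-1}\xi(r)$, then collapses to
\[
(1+t)\,\tilvarqsa(t)=S(t;0)\,\tilvarqsa(0)+\int_0^t S(t;r)\,\xi(r)\,dr .
\]

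Next I would integrate the remaining integral by parts twice, using $\xi(r)=\frac{d}{dr}\xi^I(r)$ and then $\xi^I(r)=\frac{d}{dr}\xi^{I\!I}(r)$, together with $S(t;t)=I$, $\xi^I(0)=\xi^{I\!I}(0)=0$, and the identity $\partial_r S(t;r)=-(1+r)^{-1}\barA\,S(t;r)$ that follows from \eqref{e:STM}. The first pass produces $\xi^I(t)+\int_0^t(1+r)^{-1}\barA\,S(t;r)\,\xi^I(r)\,dr$; applying integration by parts again to that residual integral yields
\[
\int_0^t S(t;r)\,\xi(r)\,dr=\xi^I(t)+\frac{1}{1+t}\,\barA\,\xi^{I\!I}(t)+\int_0^t\frac{1}{(1+r)^2}\,(\barA+\barA^2)\,S(t;r)\,\xi^{I\!I}(r)\,dr ,
\]
and I would denote by $E(t)$ the sum of the last two terms, which depends only on $\bfxi$ and $A$, never on $\tilvarqsa(0)$. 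Substituting and dividing by $1+t$ will give the structure of \eqref{e:varSimple}, provided $E(t)$ is suitably small.

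The hard part is bounding $E(t)$ with exactly the exponent $\delta_S=\min(\epsy_S,1)$. Because $\barA$ is Hurwitz with $-\Real(\barlambda)>\epsy_S$ for every eigenvalue $\barlambda$ (the strict inequality absorbing any polynomial factors from Jordan blocks), one has $\|\exp(\tau\barA)\|\le C e^{-\epsy_S\tau}$ for $\tau\ge0$, i.e. $\|S(t;r)\|\le C(\tfrac{1+r}{1+t})^{\epsy_S}$ for $0\le r\le t$. Using boundedness of $\xi^I,\xi^{I\!I}$ from (A6ii), the first term of $E$ is $O((1+t)^{-1})$ and the integral term is at most a constant times $(1+t)^{-\epsy_S}\int_0^t(1+r)^{\epsy_S-2}\,dr$; a short case split on $\epsy_S<1$ (bounded integral) versus $\epsy_S>1$ (integral of order $(1+t)^{\epsy_S-1}$), the value $\epsy_S=1$ being avoidable by the freedom in choosing $\epsy_S$, gives $\|E(t)\|=O((1+t)^{-\delta_S})$ uniformly, with a constant independent of $\tilvarqsa(0)$. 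Then $\tilvarqsa(t)=(1+t)^{-1}[\xi^I(t)+S(t;0)\tilvarqsa(0)]+(1+t)^{-1}E(t)$ with the last term $O((1+t)^{-1-\delta_S})$ and $\tilvarqsa(0)$-free, which is \eqref{e:varSimple}; and since $\nu(t)=(1+t)\tilvarqsa(t)=\xi^I(t)+S(t;0)\tilvarqsa(0)+E(t)$ with $\|S(t;0)\tilvarqsa(0)\|\le C(1+t)^{-\epsy_S}\|\tilvarqsa(0)\|=O\bigl((1+\|\tilvarqsa(0)\|)(1+t)^{-\delta_S}\bigr)$ (using $\delta_S\le\epsy_S$), \eqref{e:BestErrorEvolutionFormula} follows. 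I expect the genuine obstacle to be precisely this estimate: securing the uniform-in-$r$ bound on $\|S(t;r)\|$ with the sharp rate and carrying every $O$-constant free of the initial condition, which is what forces the $S(t;0)\tilvarqsa(0)$ term to be displayed explicitly rather than absorbed into the error.
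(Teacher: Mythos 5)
Your proposal is correct and follows essentially the same route as the paper: your variation-of-constants formula plus the first integration by parts reproduces exactly the representation $\tilnu(t)=S(t;0)\tilvarqsa(0)+\int_0^t\tfrac{1}{1+r}S(t;r)\barA\xi^I(r)\,dr$ of Lemma~\ref{t:STM}, and your second integration by parts together with the bound $\|S(t;r)\|\le C\bigl(\tfrac{1+r}{1+t}\bigr)^{\epsy_S}$ and the case split on $\epsy_S$ is precisely Lemma~\ref{t:StateTransIntBdd}. The only difference is organizational (you solve for $\nu$ directly and peel off $\xi^I$ afterward, rather than first deriving the ODE for $\tilnu$), which changes nothing of substance.
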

%The matrix-valued function $S(t;r)$ in  \eqref{e:stateTrans} is   the \textit{state transition matrix} for the ODE 
%\[ 
%\ddt x(t) = \frac{1}{t+1} \barA x(t). 
%\]
%It is easily shown that it satisfies the defining properties
%\begin{equation}
%S(t;t)=I\,,\quad  \ddt S(t;r) = \frac{1}{t+1} \barA  %S(t;r)\,,\quad r,t\ge 0.
%\label{e:STM}
%\end{equation}

% \Lemma{t:STM} below implies that the term $ (t+1)^{-1}S(t;0) \tilvarqsa(0)$ appearing in 
% \eqref{e:varSimple}
% can be absorbed into the error term.  It is included in \eqref{e:varSimple} only to emphasize dependence on the initial condition.  That is,  \eqref{e:varSimple} combined with \Lemma{t:STM}  give

The remarkable coupling bound 
\eqref{e:BestErrorEvolutionFormula} follows from \eqref{e:varSimple} and
\Lemma{t:STM} below.  Coupling is illustrated here using the simple integration experiment of
\Section{s:QMC}. The representation \eqref{e:QMC} must be modified to fit the assumptions of the theorem.   First,  denote by $\bfxi^0$  a periodic function of time whose sample paths define the uniform distribution on $[0,1]$: 
for any continuous function $c$,
\[
\lim_{T\to\infty}\frac{1}{T} \int_0^T c(\xi^0(t)) \, dt   = \int_0^1 c(x)\, dx.
\] 
% for example, the sawtooth  function,  $\xi^0(t) = t$  (mod 1).
Introduce a gain $g>0$, and consider the error equation,
% $\bftilvartheta $ for the resulting algorithm:
\begin{equation}
 \ddt \tilvarqsa(t) =  \frac{g}{t+1} [y(\xi^0(t)) - \varqsa^* - \tilvarqsa(t) ]
 \label{e:QMCg}
\end{equation}
The assumptions of the theorem are satisfied with $A=-g$ and $\xi(t)= g [y(\xi^0(t)) - \varqsa^*]$.

\spm{This is an honest description of what I simulated.
I didn't get such nice results with gain g/(g+t).  Remember, in the Euler approximation the real gain is dt *g/(1+t),   so the large g isn't such a big deal.   
}
Figures~\ref{f:plotsQMC}  and \ref{f:hists}  
illustrate the qualitative conclusion of \Theorem{t:var}:  that it is useful to choose $g>1$ in  \eqref{e:QMCg}, so that Assumption (A6i) is satisfied. %In these experiments,  $\bfxi^0$ was chosen to be the sawtooth  function,  $\xi^0(t) = t$  (mod 1).   

Coupling is illustrated in \Fig{f:nu}.  The scaled errors  $g^{-1}\bfnu$ are compared since $\bfxi$ grows linearly with $g$:  we expect  $ g^{-1}\nu(t)\approx \int_0^t (y(\xi^0(r)) - \varqsa^*) $ for large $t$.
The initial condition was set to $\varqsa(0)=10$  in each experiment.   

\begin{figure}[ht]
\Ebox{1}{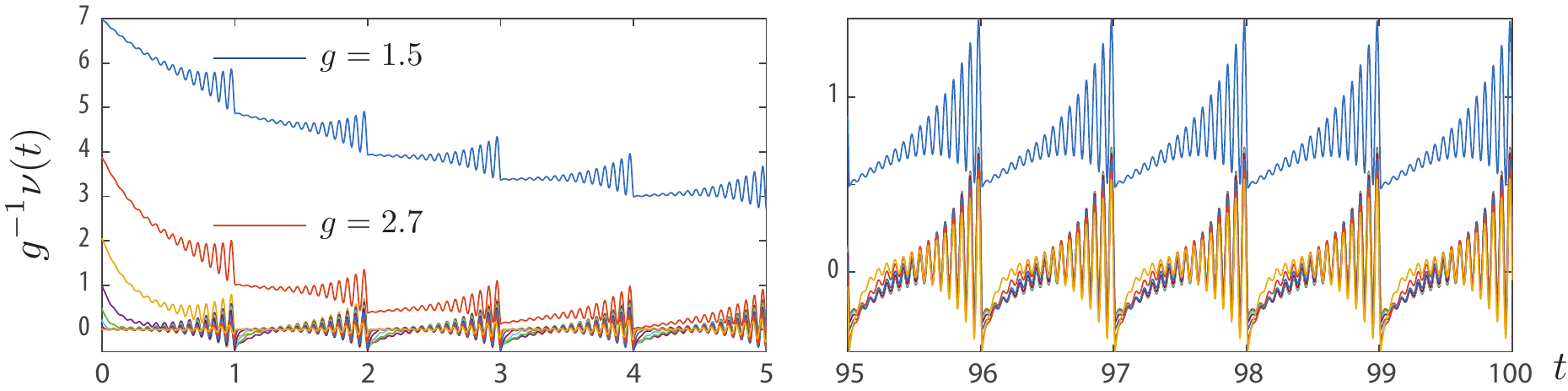} 
\caption{\small  Evolution of $\nu(t)=(1+t)\tilvarqsa(t)$ using  Quasi Monte-Carlo estimates for a range of gains.} 
\vspace{-1em}
\label{f:nu}
\end{figure}

The figure shows results using ten gains,  approximately equally spaced on a logarithmic scale.  The smallest gain is $g=1.5$,  and all other gains satisfy $g\ge 2$.   \Theorem{t:var} asserts that
$|\nu(t) -\xi^I(t) | = O\bigl([1+t]^{-\delta_S}\bigr)$, where $\delta_S<0.5$ for $g=1.5$,  and $\delta_S=1$ for $g\ge 2$. 
  The scaled errors   $\{g^{-1}\nu(t) : 95\le t\le 100\}$ are nearly indistinguishable when $g\ge 2$.  The slower convergence for $g=2.7$ is probably due to the term $S(t;0) \tilvarqsa(0)$ appearing in \eqref{e:varSimple}. 

% The slower rate of convergence is clearly observed for the value $g=1.5$.

Results using gains $g\le 1$ are omitted.  
As expected, $\bfnu$ is unbounded for $g<1$.  
For $g=1$ the approximation \eqref{e:BestErrorEvolutionFormula} fails since 
$\nu(t)$ evolves near $\nu(0)$ for the entire run.

\smallskip

The proof of Theorem \ref{t:var} leverages the following auxiliary results.  Let 
$\tilnu(t)=\nu(t)-\xi^I(t)$,
$t\ge 0$,
denote the ``second-order'' error process.

\begin{lemma}
\label{t:STM} 
The scaled error processes 
solve the respective linear differential equations
 \begin{equation}
 \begin{aligned}
 \ddt \nu(t) &= \frac{1}{t+1} \barA \nu(t) + \xi(t) 
 \\
\ddt \tilnu(t) & = \frac{1}{t+1} \barA \tilnu(t) + \frac{1}{t+1} \barA \xi^I(t)
% \,,\  \nu(0)=\tilnu(0)=\tilvarqsa(0)
% \, . 
 \end{aligned}
\label{e:tilnuDyn}
\end{equation}
The ODE for the second-order error admits the solution
 \begin{equation}
  \tilnu(t) 
   =   S(t,0)\tilvarqsa(0) 
   + \int_0^t  \frac{1}{r+1}  S(t;r)  \barA \xi^I(r)\, dr % \,,\qquad t\ge 0  
  \label{e:tilnuDynSolved}
\end{equation}
where $S$ is defined in \eqref{e:stateTrans}.
Under the eigenvalue assumptions in (A6), there exists  $b_S<\infty$ such that
\[
\| S(t;r) \|_2 \le b_S\Bigl[ \frac{1+t}{1+r} \Bigr ]^{-\epsy_S}
\]
where $\| S(t;r) \|_2$ denotes the  maximal singular value. 
%induced operator norm on L2:  
%https://en.wikipedia.org/wiki/Matrix_norm#Matrix_norms_induced_by_vector_norms
\end{lemma}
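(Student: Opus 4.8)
The plan is to dispatch the three assertions in order: first derive the two linear ODEs by differentiating, then solve the $\tilnu$ equation by variation of constants, and finally establish the exponential norm bound on the state transition matrix $S$.

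\textbf{The ODEs.} Under (A6) we have $\varqsa^*=0$, so $\tilvarqsa=\varqsa$, and \eqref{eq:algo} reads $\ddt\tilvarqsa(t)=\frac{1}{t+1}[A\tilvarqsa(t)+\xi(t)]$. Differentiating $\nu(t)=(t+1)\tilvarqsa(t)$ with the product rule gives $\ddt\nu(t)=\tilvarqsa(t)+(t+1)\ddt\tilvarqsa(t)=\tilvarqsa(t)+A\tilvarqsa(t)+\xi(t)=\barA\tilvarqsa(t)+\xi(t)$, and substituting $\tilvarqsa(t)=\frac{1}{t+1}\nu(t)$ yields the first line of \eqref{e:tilnuDyn}. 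Since $\xi^I$ is by definition an antiderivative of $\xi$, subtracting $\ddt\xi^I(t)=\xi(t)$ cancels the forcing term: $\ddt\tilnu(t)=\frac{1}{t+1}\barA\nu(t)=\frac{1}{t+1}\barA(\tilnu(t)+\xi^I(t))$, the second line of \eqref{e:tilnuDyn}; and $\tilnu(0)=\nu(0)-\xi^I(0)=\tilvarqsa(0)$.

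\textbf{Variation of constants.} The relations \eqref{e:STM} identify $S(\cdot\,;r)$ as the fundamental matrix of the homogeneous equation $\ddt x=\frac{1}{t+1}\barA x$, so I would verify \eqref{e:tilnuDynSolved} directly. At $t=0$ the integral vanishes and $S(0;0)=I$, recovering $\tilnu(0)=\tilvarqsa(0)$; differentiating in $t$, the first term contributes $\frac{1}{t+1}\barA S(t;0)\tilvarqsa(0)$ by \eqref{e:STM}, while Leibniz's rule applied to the integral produces the boundary term $S(t;t)\frac{1}{t+1}\barA\xi^I(t)=\frac{1}{t+1}\barA\xi^I(t)$ together with $\frac{1}{t+1}\barA$ times the integral itself; summing reproduces \eqref{e:tilnuDyn}, and uniqueness of solutions to this linear ODE gives \eqref{e:tilnuDynSolved}.

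\textbf{The norm bound.} Writing $\tau=\tau(t,r):=\log[(1+t)/(1+r)]\ge 0$ for $t\ge r$, we have $S(t;r)=e^{\tau\barA}$ from \eqref{e:stateTrans}. By (A6i) every eigenvalue $\barlambda$ of $\barA=I+A$ satisfies $\Real(\barlambda)<-\epsy_S<0$, so the spectral abscissa of $\barA$ lies strictly below $-\epsy_S$; a standard estimate (via the Jordan form, absorbing polynomial-in-$\tau$ factors against the exponential gap, or via a Lyapunov inequality $\barA^\transpose Q+Q\barA\preceq-2\epsy_S Q$ with $Q\succ0$ and passing to the $2$-norm through $\mathrm{cond}(Q)$) yields a finite $b_S$ with $\|e^{\tau\barA}\|_2\le b_S e^{-\epsy_S\tau}$ for all $\tau\ge0$. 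Substituting $e^{-\epsy_S\tau}=[(1+t)/(1+r)]^{-\epsy_S}$ gives the claimed bound. The only step requiring genuine care is this last estimate when $\barA$ is not diagonalizable — one needs the polynomial prefactor in $\|e^{\tau\barA}\|_2$ to be dominated by the exponential, which is exactly why $\epsy_S$ was chosen strictly below $-\Real(\barlambda)$ rather than equal to it; everything else is bookkeeping.
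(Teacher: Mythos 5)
Your proposal is correct and follows exactly the route the paper intends: the paper's own proof is only the two-sentence assertion that the representation follows from the state transition matrix interpretation \eqref{e:STM} and that the norm bound ``easily follows,'' and your computation (product rule for $\nu$, cancellation of the forcing term for $\tilnu$, verification of the variation-of-constants formula via \eqref{e:STM} and Leibniz, and the spectral estimate $\|e^{\tau\barA}\|_2\le b_S e^{-\epsy_S\tau}$ for $\tau\ge 0$, i.e.\ $t\ge r$) is precisely the bookkeeping that assertion hides. No gaps.
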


\begin{proof}
The representation follows from the state transition matrix interpretation \eqref{e:STM}.
The bound on  $\| S(t;r) \|_2$ easily follows.   
 \end{proof}
 
The proof of the next result is contained in the Appendix.
\begin{lemma}
\label{t:StateTransIntBdd}
 For $t\ge 0$,
\begin{equation}
 \begin{aligned}
%  \int_0^t   S(t;r)   \xi(r)\, dr
% & =
%  \xi^I(t)  - S(t;0) \xi^I(0)
%  \\
%  &\quad  +  
%   \int_0^t \frac{1}{(1+r)} S(t;r)\barA \xi^I(r)\, dr
%   \\[1.5em]
   \int_0^t  & \frac{1}{1+r}  S(t;r)\barA \xi^I(r)\, dr
   \\
& =
 \frac{1}{1+t} \barA\xi^{I\!I}(t)  - \barA S(t;0) \barA\xi^{I\!I}(0)
 \\
 & \quad +  
  \int_0^t \frac{1}{(1+r)^2} S(t;r)[I+\barA] \barA \xi^{I\!I}(r)\,   dr\,  .
\end{aligned}
\label{e:Sint}
\end{equation}
There exists $b_\nu<\infty $ such that
\begin{equation}
   \int_0^t \frac{1}{(1+r)^2} \| S(t;r)\|\, dr \le b_\nu \frac{1}{ (1+t)^{\delta_S}}\,,\quad t\ge 0\, .
   \label{e:intStateTransm}
 \end{equation} 
 \qed
 \end{lemma}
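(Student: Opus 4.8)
\textbf{Proof of \Lemma{t:StateTransIntBdd} (proposal).}
The plan is to obtain the identity \eqref{e:Sint} by a single integration by parts, and then to deduce the decay bound \eqref{e:intStateTransm} from the singular-value estimate of \Lemma{t:STM} together with an elementary estimate of a scalar integral.

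For \eqref{e:Sint}, the starting observation is that $\xi^{I\!I}$ is an antiderivative of $\xi^I$, so $\xi^I(r)\,dr = d\bigl[\xi^{I\!I}(r)\bigr]$. Write $M(r) \eqdef (1+r)^{-1} S(t;r)\barA$ for the matrix-valued coefficient, so that the left-hand side of \eqref{e:Sint} is $\int_0^t M(r)\,d\xi^{I\!I}(r)$; integrating by parts gives $M(t)\xi^{I\!I}(t) - M(0)\xi^{I\!I}(0) - \int_0^t M'(r)\xi^{I\!I}(r)\,dr$. To compute $M'$ I will differentiate the definition \eqref{e:stateTrans} in its second argument, obtaining $\ddr S(t;r) = -(1+r)^{-1}\barA S(t;r)$, the natural companion of \eqref{e:STM}; since $\barA$ commutes with $S(t;r)$ (a matrix exponential of $\barA$), a short computation then yields $M'(r) = -(1+r)^{-2} S(t;r)[I+\barA]\barA$. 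The boundary terms are $M(t) = (1+t)^{-1}\barA$, using $S(t;t)=I$, and $M(0) = S(t;0)\barA$; the middle boundary term vanishes because $\xi^{I\!I}(0)=\int_0^0\xi^I = 0$. Collecting these pieces gives exactly \eqref{e:Sint}.

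For \eqref{e:intStateTransm}, I will insert $\|S(t;r)\|_2 \le b_S\bigl[(1+t)/(1+r)\bigr]^{-\epsy_S}$ from \Lemma{t:STM}, which reduces the claim to showing
\[
(1+t)^{-\epsy_S}\int_0^t (1+r)^{\epsy_S-2}\,dr \;\le\; \tfrac{b_\nu}{b_S}\,(1+t)^{-\delta_S}.
\]
Evaluating the scalar integral according to whether $\epsy_S<1$ or $\epsy_S>1$, it is bounded by a constant in the first case and by a constant multiple of $(1+t)^{\epsy_S-1}$ in the second; in either case it is $O\bigl((1+t)^{\max(\epsy_S-1,\,0)}\bigr)$, so the product is $O\bigl((1+t)^{-\min(\epsy_S,1)}\bigr) = O\bigl((1+t)^{-\delta_S}\bigr)$, as required, with $b_\nu$ an explicit multiple of $b_S$.

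The only delicate point is the borderline exponent $\epsy_S=1$, for which the scalar integral equals $\log(1+t)$ and the clean power bound fails; this is harmless because $\epsy_S$ is a free parameter constrained only by $0<\epsy_S<\min_{\barlambda}(-\Real(\barlambda))$, hence can always be chosen different from $1$ (take $\epsy_S\in(1,\min_{\barlambda}(-\Real(\barlambda)))$ when that interval is nonempty, so as not to sacrifice the rate $\delta_S=1$, and $\epsy_S<1$ otherwise). Beyond this bookkeeping both parts are routine; the substantive content is the identity \eqref{e:Sint}, which is precisely what upgrades the $O((1+t)^{-\epsy_S})$ decay of $S$ weighted by $(1+r)^{-1}$ — not by itself integrable to the right order — into the $O((1+t)^{-\delta_S})$ estimate feeding the coupling bound \eqref{e:BestErrorEvolutionFormula}. \qed
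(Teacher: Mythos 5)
Your proposal is correct and follows essentially the same route as the paper: the identity \eqref{e:Sint} is obtained by integration by parts using $\ddr S(t;r) = -(1+r)^{-1}S(t;r)\barA$ (the paper merely states this for a general weight $(1+r)^{-m}$ and integrand $\gamma$ before specializing to $m=1$, $\gamma=\barA\xi^I$), and the bound \eqref{e:intStateTransm} follows from the singular-value estimate of \Lemma{t:STM} plus the elementary scalar integral exactly as you compute. Your explicit handling of the borderline case $\epsy_S=1$ by perturbing the choice of $\epsy_S$ is a detail the paper leaves implicit under ``elementary calculus,'' and is a welcome clarification rather than a deviation.
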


\paragraph*{Proof of \Theorem{t:var}}
Lemmas~\ref{t:STM} and \ref{t:StateTransIntBdd}   give 
\[
\begin{aligned}
  \tilnu(t)  
  & =  S(t,0)\tilvarqsa(0) 
    +\clE_{\tilnu}(t)
    \\
\clE_{\tilnu}(t) &=     \frac{1}{t+1} \barA  \xi^{I\!I}(t)  
-  S(t;0)\barA  \xi^{I\!I}(0)   
\\
&\quad
   +  \int_0^t \frac{1}{(1+r)^2} S(t;r)[I+\barA]\barA \xi^{I\!I}(r)\,   dr\,   .
\end{aligned}
\] 
The two lemmas   imply that $\| \clE_{\tilnu}(t)\| \le O\bigl( (1+t)^{-\delta_S} \bigr)$.  \qed

\def\varqsaPR{\varqsa^{\text{PR}}}

% \subsection{Polyak-Ruppert averaging}
 
% \rd{Too much, no.   I think we just list open problems in the conclusions.}
% Why restrict to a gain of $(t+1)^{-1}$?   

% \begin{theorem}
% \label{t:PR}
% $1/t$ rate of convergence!
%  \end{theorem}
 
%  \begin{proof}
%  Integration by parts once more.   I wonder if we can get such nice finite-$t$ bounds?
%  \end{proof}

\section{Conclusion}
\label{s:conclude}
%%%%%%%%%%%%%%%%%%%% 

% When possible, it is best to avoid the introduction of i.i.d.\ or Markovian disturbances in learning algorithms.   \rd{{\huge examples to make this point}
% \\
% Isn't this now clear from the essay in the introduction?  $1/t$ rather than $1/\sqrt{t}$ rate of convergence?   In ``stochastic optimization'' they get $1/t$ rate by changing the problem -- the rate of convergence is to an approximation to the original problem. 
% \\
% The $1/t$ in stochastic optimization refers to the rate for the value, not the parameters.
% }

While QSA can  result in significant improvement in convergence rate,    the results of \Section{s:QSA} demonstrate that QSA algorithms must be implemented with care.   If the gain does not satisfy the assumptions of \Theorem{t:var} then the rate of convergence can be \textit{slower} than obtained in an i.i.d.\ or Markovian setting.  

There are many interesting topics for future research:
\begin{romannum}
\item   Further work is required to extend \Theorem{t:var} to the nonlinear algorithm.    

% \item
% Many of the results of this paper can extended to Polyak-Ruppert averaging.   
% In these algorithms the gain is increased to obtain estimates $ \bfvarqsa $ with high volatility, and these estimates are then averaged 
% \[
% \ddt \varqsaPR(t) = \frac{1}{t+1}[-\varqsaPR(t) +\varqsa(t)]
% \]
% A typical example is the gain $a(t) = (1+t)^{-\varrho}$ to generate $\bfvarqsa$,  with $\varrho\in (0,1)$.  The key question is variance.  Based on existing results for stochastic approximation  \cite{rup88,pol90,poljud92,kontsi04}, it may be expected that  (A6i) can be relaxed to $\Real(\lambda)<0$ (the matrix $A$ is merely Hurwitz),  and we obtain 
% \[
% \varqsaPR(t) = \varqsa^* + a(t)  \xi^I(t)  + o(a(t))
% \]

\item    Constant-gain algorithms are amenable to analysis using similar techniques.

\item We are most interested in applications to control and optimization:  
\item[(a)]
On-line learning applications, in which the function $f$ itself varies with time.   
That is, \eqref{eq:algo} is replaced by
\[
\ddt\varqsa(t) = a f_t(\varqsa(t),\xi(t)) \,,
\]
Analysis will be far simpler than in a traditional SA setting. 
\item[(b)]
Applications to decentralized control using reinforcement learning techniques.   In the LQR setting, the architecture for Q-learning or fixed-policy 
Q-learning might be informed by recent computational techniques for  control synthesis \cite{dhikhojovTAC18}.

\end{romannum}
%\end{romannum}

% \bigskip

%  {}
% \null  
% \null  %Needed with \usepackage{flushend}

\appendix
% \begin{center}
% {\LARGE \bf  Appendices}
% \end{center}

\section{Stability and Convergence}

%\rd{Why does Wikipedia delete Bellman?  Need to search history}

Many of the results that follow are based on the following standard inequality  -- common in the theory of stochastic approximation, as well as ordinary differential equations.

\begin{proposition}[Gronwall Inequality]   
\label{t:Gronwall}
Suppose that $\beta>0$,  $\bfalpha$ is non-decreasing,  and the scalar function of time $\bfmz$ satisfies the following   inequality on a time interval $[0,T]$:
\[
0\le  z(t)\leq \alpha (t)+\int _0^{t}\beta (s)z(s)\,  ds\,,  \qquad 0\le t\le T\, .
\]
Then,  $ z(t)\leq \alpha (t) e^{\beta t} $ for all $0\le t\le T$.
\qed
\end{proposition}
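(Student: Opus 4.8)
The plan is the classical integrating-factor argument. First I would define the running integral $w(t) \eqdef \int_0^t \beta z(s)\, ds$ on $[0,T]$, so that $w(0)=0$ and $w$ is (a.e.) differentiable with $\ddt w(t) = \beta z(t)$. The hypothesis $z(t)\le \alpha(t)+w(t)$ then yields the differential inequality $\ddt w(t) \le \beta\alpha(t) + \beta w(t)$ on $[0,T]$.

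Next I would multiply by the integrating factor $e^{-\beta t}$, giving $\ddt\!\bigl(e^{-\beta t}w(t)\bigr) \le \beta\alpha(t)e^{-\beta t}$, and integrate over $[0,t]$ using $w(0)=0$ to obtain $e^{-\beta t}w(t)\le \int_0^t \beta\alpha(s)e^{-\beta s}\, ds$. This is the only step that uses the hypothesis on $\bfalpha$: since $\bfalpha$ is non-decreasing, $\alpha(s)\le\alpha(t)$ for $s\le t$, so the right-hand side is at most $\alpha(t)\int_0^t \beta e^{-\beta s}\, ds = \alpha(t)\bigl(1-e^{-\beta t}\bigr)$. Rearranging gives $w(t)\le \alpha(t)\bigl(e^{\beta t}-1\bigr)$, and substituting back into $z(t)\le \alpha(t)+w(t)$ delivers $z(t)\le \alpha(t)e^{\beta t}$, which is the claim.

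If one prefers a proof that does not invoke differentiability of $\bfmz$ or $w$, the same bound follows by iterating the integral inequality: repeatedly substituting the bound on $z$ into the integral $n$ times, and using monotonicity of $\bfalpha$ to pull $\alpha$ outside each nested integral, gives $z(t)\le \alpha(t)\sum_{k=0}^{n-1}(\beta t)^k/k! + R_n(t)$, where the remainder $R_n(t)$ is an $n$-fold iterated integral of $\beta^n z$ over the simplex $\{\,0\le s_1\le\cdots\le s_n\le t\,\}$, hence bounded by $\bigl(\sup_{[0,T]}z\bigr)(\beta T)^n/n!\to 0$ as $n\to\infty$. Letting $n\to\infty$ recovers $z(t)\le\alpha(t)e^{\beta t}$. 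There is essentially no obstacle here: the result is entirely standard, and the only point worth flagging is that $\bfalpha$ is merely assumed non-decreasing rather than constant, which is exactly the property exploited when $\alpha$ is taken out of the integral in the last step.
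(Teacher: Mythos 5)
Your proof is correct. The paper itself supplies no proof of this proposition --- it is stated with a terminal \IEEEQEDclosed as a standard fact --- so there is nothing to compare against; your integrating-factor argument (set $w(t)=\int_0^t\beta z(s)\,ds$, derive $\ddt\bigl(e^{-\beta t}w(t)\bigr)\le\beta\alpha(t)e^{-\beta t}$, integrate, and use monotonicity of $\bfalpha$ to pull $\alpha(t)$ out of the integral) is the textbook route and is complete, and your fallback via Picard-style iteration correctly sidesteps any differentiability concerns about $\bfmz$. The only point worth noting is that the statement writes $\beta(s)$ inside the integral as though $\beta$ were a function of time, while the hypothesis ``$\beta>0$'' and the conclusion $e^{\beta t}$ treat it as a constant; you adopted the constant reading, which is the one consistent with the stated conclusion and with how the lemma is invoked elsewhere in the paper (e.g., in the proof of Lemma A.8, where the Lipschitz constant $L$ plays the role of $\beta$).
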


The next result is commonly used to provide bounds on solutions to differential equations.
\begin{lemma}
\label{t:ODEbdd}
Under the Lipschitz conditions in (A4) there is a non-decreasing, non-negative function $b_L$ such that
\[
\begin{aligned}
\| \varscaled(u+v) - \varscaled(u) \| 
  &\le b_L(v)  (1+\|\varscaled(u)\| )v
  \\
  \| \varode^u(u+v) - \varscaled(u) \| 
  &\le b_L(v)  (1+\|\varscaled(u)\| )v\,,\qquad u\ge 0\,, 
\end{aligned}
\]
where $ \varode^u$ is defined in 
\eqref{eq:init}.
\qed
\end{lemma}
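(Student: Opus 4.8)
The plan is to write each of the two differences as an integral of the governing vector field and then close the estimate with the Gronwall inequality (Proposition~\ref{t:Gronwall}). Begin with the ODE bound, which is the cleaner of the two. From the integral form of \eqref{eq:init},
\[
\varode^u(u+v) - \varscaled(u) = \int_u^{u+v} \bar{f}(\varode^u(w))\, dw .
\]
The Lipschitz property of $\bar f$ in (A4) gives the linear growth bound $\|\bar f(x)\| \le C(1+\|x\|)$ with $C:=\max\{\|\bar f(0)\|,\,L\}$ (this also guarantees that $\varode^u$ is defined for all $w\ge u$). Setting $h(s):=\|\varode^u(u+s)-\varscaled(u)\|$ and using $\|\varode^u(u+r)\|\le\|\varscaled(u)\|+h(r)$,
\[
h(s) \le \int_0^s C\bigl(1+\|\varscaled(u)\|+h(r)\bigr)\,dr
      = Cs\bigl(1+\|\varscaled(u)\|\bigr) + C\int_0^s h(r)\,dr .
\]
Since $s\mapsto Cs(1+\|\varscaled(u)\|)$ is non-decreasing, Proposition~\ref{t:Gronwall} yields $h(v)\le Cv(1+\|\varscaled(u)\|)e^{Cv}$, which is the claimed bound with $b_L(v):=Ce^{Cv}$, a non-negative non-decreasing function.

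For the first bound I would run the identical argument starting from the integral form of \eqref{eq:algoscaled},
\[
\varscaled(u+v) - \varscaled(u) = \int_u^{u+v} f\bigl(\varscaled(w),\xi(g^{-1}(w))\bigr)\,dw ,
\]
and invoking a linear growth bound $\|f(x,\xi)\|\le C'(1+\|x\|)$ that is uniform in $\xi$. The same Gronwall step then produces $\|\varscaled(u+v)-\varscaled(u)\|\le C'v(1+\|\varscaled(u)\|)e^{C'v}$, and replacing $C$ by $\max\{C,C'\}$ in the definition of $b_L$ covers both inequalities with a single function.

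The one place that needs care — and the only real obstacle — is the uniform-in-$\xi$ growth estimate for $f$ used in the second part. Assumption (A4) only gives $\|f(\varqsa,\xi)\|\le\|f(0,\xi)\|+L\|\varqsa\|$, so one needs $\sup_{t\ge 0}\|f(0,\xi(t))\|<\infty$; this holds in the settings of interest, where $\xi$ takes values in a compact set and $f$ is continuous. Alternatively, one can dispense with any pointwise control on $f$: add and subtract $f(\varscaled(u),\xi(g^{-1}(w)))$ inside the integral, bound the first piece by $L\|\varscaled(w)-\varscaled(u)\|$ via (A4), and handle the second piece — with the first argument frozen at $\varscaled(u)$ — by passing back to the original time scale and applying (A3) on the corresponding window, integrating by parts against the monotone gain $a$ of (A5) to absorb its decay. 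Either route feeds into the same Gronwall estimate and yields the stated $b_L$.
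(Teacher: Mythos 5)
The paper offers no proof of this lemma (it is asserted as standard), so there is nothing to compare against line by line; your integral-plus-Gronwall argument is exactly the intended one, and your treatment of the second inequality (for $\varode^u$) is complete and correct: the Lipschitz bound on $\barf$ gives linear growth, and \Prop{t:Gronwall} closes the estimate with $b_L(v)=Ce^{Cv}$.

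Your handling of the first inequality contains the one substantive observation worth recording: (A4) alone does \emph{not} yield a linear growth bound for $f(\cdot,\xi)$ uniform in $\xi$, so the lemma implicitly requires $\sup_{t\ge 0}\|f(0,\xi(t))\|<\infty$. That is a genuine gap in the statement as written, and your primary fix (boundedness of $\bfxi$ together with continuity of $f$, as holds in all of the paper's applications and is made explicit in (A6ii)) is the right one. However, be careful with the alternative route you sketch via (A3): freezing the first argument at $\varscaled(u)$ and invoking the averaging bound (in the form of \Lemma{thm:LLN}) controls the frozen-argument integral by
\[
\Bigl\|\int_u^{u+v} f(\varscaled(u),\xi(g^{-1}(w)))\,dw\Bigr\|
\le v\,\|\barf(\varscaled(u))\| + \epsy_f(u)\,\|\varscaled(u)\|,
\]
and the second term carries no factor of $v$. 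Feeding this into Gronwall produces a bound of the form $\bigl(Cv(1+\|\varscaled(u)\|)+\epsy_f(u)\|\varscaled(u)\|\bigr)e^{Lv}$, which is \emph{not} of the stated form $b_L(v)(1+\|\varscaled(u)\|)v$. The factor of $v$ is not cosmetic: it is used downstream in \Lemma{thm:growthbnd}, where $\int_u^{u+v}\|\varscaled(w)-\varscaled(u)\|\,dw$ must be $O(v^2)$ to obtain the $\bar b T^2$ term in \eqref{eq:growthbnd}. So the (A3)-based workaround weakens the conclusion in a way that would propagate; the uniform bound on $f(0,\xi(\cdot))$ should be taken as a standing hypothesis instead.
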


\subsection{Ultimate Boundedness}

Ultimate boundedness required in the proof of Theorem \ref{t:convergence} is established in the following.
\begin{proposition}
\label{t:stability}
The solution to \eqref{eq:algoscaled} is ultimately bounded: there exists $b<\infty$ such that for any $\varscaled(0)=\varscaled$, $\limsup_{u \rightarrow \infty} \|\varscaled(u)\|\le b$.
\qed
\end{proposition}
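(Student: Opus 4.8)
The plan is to adapt the ``ODE method'' for stability of Borkar and Meyn to the present deterministic-noise setting. First I would discretize the scaled clock into consecutive intervals $[nT,(n+1)T]$ of a fixed length $T\in(0,1]$ and, on each interval, compare $\bfvarscaled$ with the mean flow $\varode^{nT}$ started from the current iterate, $\varode^{nT}(nT)=\varscaled(nT)$. Writing $V(\varscaled((n+1)T))-V(\varscaled(nT))$ as the sum of a fluid term $V(\varode^{nT}((n+1)T))-V(\varode^{nT}(nT))$ and a discrepancy term $V(\varscaled((n+1)T))-V(\varode^{nT}((n+1)T))$, assumption (A4) bounds the discrepancy term by $L\|\varscaled((n+1)T)-\varode^{nT}((n+1)T)\|$, while assumption (A2) bounds the fluid term by $-T\|\varscaled(nT)\|$ whenever $\|\varscaled(nT)\|>c_0$. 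Everything then hinges on a sharp enough estimate of the one-step discrepancy $\|\varscaled((n+1)T)-\varode^{nT}((n+1)T)\|$.

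For that estimate I would split $f(\varscaled(w),\xi(g^{-1}(w)))-\barf(\varode^{nT}(w))$ into the Lipschitz piece $\barf(\varscaled(w))-\barf(\varode^{nT}(w))$, which is absorbed by a Gronwall step (Proposition \ref{t:Gronwall}), and the ``noise'' piece $f(\varscaled(w),\xi(g^{-1}(w)))-\barf(\varscaled(w))$. Freezing the parameter at $\varscaled(nT)$, the unfreezing error is controlled by (A4) together with Lemma \ref{t:ODEbdd} and is $O(T^{2}b_L(T)(1+\|\varscaled(nT)\|))$, hence small for small $T$; the remaining frozen integral $\int_{nT}^{nT+v}[f(\varscaled(nT),\xi(g^{-1}(w)))-\barf(\varscaled(nT))]\,dw$ must be estimated from (A3). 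This is the main obstacle, because (A3) lives on the original time scale with integrals anchored at $0$. I would substitute $w=g(t)$, so that $dw=a(t)\,dt$, write the integrand as $\Phi'(t)a(t)$ with $\Phi(t)=\int_0^t[f(\varscaled(nT),\xi(s))-\barf(\varscaled(nT))]\,ds$, and integrate by parts; using $\|\Phi(t)\|\le b_0(1+\|\varscaled(nT)\|)$ from (A3) and the monotonicity of $\bfma$ from (A5) (so that $\int|a'|$ telescopes), the frozen integral is at most $2b_0\,a(g^{-1}(nT))(1+\|\varscaled(nT)\|)$. Since $a(g^{-1}(nT))\downarrow0$, a Gronwall closure yields
\[
\|\varscaled((n+1)T)-\varode^{nT}((n+1)T)\|\le \delta_n^{T}\,(1+\|\varscaled(nT)\|),\qquad \limsup_{n\to\infty}\delta_n^{T}=2LT^{2}b_L(T)e^{LT},
\]
which can be made arbitrarily small by shrinking $T$.

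It remains to run the Lyapunov argument. Picking $T$ with $2L^{2}Tb_L(T)e^{LT}<\tfrac14$ and then $N_0$ so that $L\delta_n^{T}<T/4$ for $n\ge N_0$, one obtains, for $n\ge N_0$ and $\|\varscaled(nT)\|>c_1$ (with $c_1$ a threshold built from $c_0$ and $\sup_{n\ge N_0}\delta_n^{T}/T$), the strict decrease $V(\varscaled((n+1)T))-V(\varscaled(nT))\le-\tfrac{T}{2}\|\varscaled(nT)\|$. Since $V\ge0$, the iterates cannot remain in $\{\|x\|>c_1\}$ for all large $n$; moreover one step from the ball of radius $c_1$ lands in the ball of radius $c_2:=(1+b_L(T)T)c_1+b_L(T)T$ by Lemma \ref{t:ODEbdd}, and along any excursion above $c_1$ the drift inequality telescopes to $\tfrac{T}{2}\sum\|\varscaled(nT)\|\le\bar V:=\sup_{\|x\|\le c_2}V(x)<\infty$. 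Hence $\limsup_{n\to\infty}\|\varscaled(nT)\|\le\max(c_1,2\bar V/T)$, a bound depending only on the constants of (A1)--(A5) and not on $\varscaled(0)$. A final interpolation over $v\in[0,T]$ via Lemma \ref{t:ODEbdd} upgrades this to $\limsup_{u\to\infty}\|\varscaled(u)\|\le b<\infty$, which is the assertion of Proposition \ref{t:stability}.
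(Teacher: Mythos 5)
Your proposal is correct and follows essentially the same route as the paper: the same time-discretized comparison of $\bfvarscaled$ with the mean flow $\varode^{nT}$, the same law-of-large-numbers estimate for the frozen noise integral obtained by integration by parts from (A3) and the monotonicity of $\bfma$ in (A5) (the paper's \Lemma{thm:LLN}), the same Gronwall closure of the one-step discrepancy (the paper's \Lemma{thm:growthbnd}), and the same choice of small $T$ and large $u_0$ to extract the drift $-\tfrac{T}{2}\|\varscaled(u)\|$. The only cosmetic difference is the last step: you deduce ultimate boundedness by telescoping the discrete drift inequality along excursions above the threshold, whereas the paper packages this as a separate continuous-time lemma (\Lemma{thm:drift}) bounding the exit time $\tau$ and constructing an absorbing ball; both deductions are standard and equivalent here.
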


Ultimate boundedness follows from a `drift condition' similar to (A2):
\begin{lemma}
\label{thm:drift}
The solution to \eqref{eq:algoscaled} is ultimately bounded if,
for some $T>0$, $0<\delta<1$,
and $u_0,b<\infty$, 
\begin{align*}
V(\varscaled(u+T)) - V(\varscaled(u)) &\le - \delta  \|\varscaled(u)\|\, ,
\end{align*}
for all $u \ge u_0,\ \|\varscaled(u)\|>b$.
\qed\end{lemma}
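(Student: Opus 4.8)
The plan is to discretize time at the instants $u_n \eqdef u_0 + nT$, $n\ge 0$, and to follow the scalar sequence $V_n \eqdef V(\varscaled(u_n))$ (we may assume $b>0$, enlarging it if necessary, since the drift hypothesis for radius $b$ implies the same for any larger radius). Since $u_n+T=u_{n+1}$, the hypothesis reads $V_{n+1} \le V_n - \delta\|\varscaled(u_n)\|$ whenever $\|\varscaled(u_n)\| > b$, while (A2) supplies $V\ge 0$. Two consequences follow immediately. First, because $V_{n+1}\ge 0$, the inequality rearranges to $\|\varscaled(u_n)\| \le V_n/\delta$ on the set $\{\|\varscaled(u_n)\|>b\}$; hence \emph{any} upper bound on $V_n$ produces one on $\|\varscaled(u_n)\|$, and no coercivity of $V$ is needed. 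Second, if $\|\varscaled(u_n)\|>b$ held for every $n$, then $V_{n+1}\le V_n-\delta b$ for all $n$, forcing $V_n \le V_0 - n\delta b \to -\infty$, contradicting $V\ge 0$; so there is a smallest index $n_1<\infty$ with $\|\varscaled(u_{n_1})\|\le b$.

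Next I would propagate a uniform bound on $V_n$ for $n\ge n_1$. The Lipschitz bound from (A4), in the form $V(x)\le V(\mathbf 0)+L\|x\|$, gives $V_{n_1}\le V(\mathbf 0)+Lb \eqdef M_0$. Put $M \eqdef M_0 + L\,b_L(T)(1+b)T$ and show $V_n\le M$ for all $n\ge n_1$ by induction: if $\|\varscaled(u_n)\|>b$ the drift condition gives $V_{n+1}<V_n\le M$; if $\|\varscaled(u_n)\|\le b$, then $V(\varscaled(u_n))\le M_0$, Lemma~\ref{t:ODEbdd} with $v=T$ bounds the increment by $\|\varscaled(u_{n+1})-\varscaled(u_n)\| \le b_L(T)(1+b)T$, and Lipschitzness of $V$ yields $V_{n+1}\le M_0 + L\,b_L(T)(1+b)T = M$. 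Combining with the first consequence of the opening step, $\|\varscaled(u_n)\| \le \max(b, M/\delta) \eqdef R$ for every $n\ge n_1$.

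Finally I would pass from the sampled bound back to the whole trajectory via Lemma~\ref{t:ODEbdd} once more: for $u\in[u_n,u_{n+1}]$ with $n\ge n_1$,
\[
\|\varscaled(u)\| \;\le\; \|\varscaled(u_n)\| + b_L(T)\bigl(1+\|\varscaled(u_n)\|\bigr)T \;\le\; R + b_L(T)(1+R)T \;\eqdef\; b_\infty .
\]
Because $M_0,M,R,b_\infty$ are assembled only from $L$, $\delta$, $T$, $b_L(T)$, the radius $b$ of the hypothesis and the number $V(\mathbf 0)$ — none depending on $\varscaled(0)$ — we obtain $\limsup_{u\to\infty}\|\varscaled(u)\|\le b_\infty$ uniformly in the initial condition, which is exactly the ultimate boundedness in the sense of Proposition~\ref{t:stability}.

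The only genuinely delicate point, I expect, is the first observation of the opening step: that nonnegativity of $V$ together with the drift inequality converts a bound on $V_n$ \emph{directly} into a bound on $\|\varscaled(u_n)\|$, so that no extra growth or coercivity hypothesis on $V$ is required. Everything else is bookkeeping, as long as one keeps in mind that (a) the drift hypothesis is invoked only at the discrete times $u_n$, and (b) the finitely many steps before $n_1$ — whose $V$-values do depend on $\varscaled(0)$ — are irrelevant to the $\limsup$.
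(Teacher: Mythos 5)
Your proof is correct, but it follows a genuinely different route from the paper's. The paper works in continuous time: it integrates the drift inequality over $[u,u+\tau_m]$, telescopes the result into $\int_{u+\tau_m}^{u+\tau_m+T}V - \int_u^{u+T}V$, and thereby bounds the first hitting time $\tau$ of the ball $\{\|\varqsa\|\le b\}$ by $b_V(1+\|\varscaled(u)\|)$; ultimate boundedness is then deduced by constructing a compact absorbing set that contains every excursion outside $\{\|\varqsa\|\le b+1\}$. You instead sample at $u_n=u_0+nT$ and run a one-step induction on $V_n=V(\varscaled(u_n))$, with the pivotal (and correct) observation that $V\ge 0$ together with $V_{n+1}\le V_n-\delta\|\varscaled(u_n)\|$ yields $\|\varscaled(u_n)\|\le V_n/\delta$, so a bound on $V$ controls the norm with no coercivity of $V$ assumed — a point the paper never has to confront because it bounds the return time rather than $V$ itself. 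Your induction (drift step when $\|\varscaled(u_n)\|>b$, Lipschitz-plus-\Lemma{t:ODEbdd} step when $\|\varscaled(u_n)\|\le b$) and the final interpolation over $[u_n,u_{n+1}]$ are all sound, and they deliver an explicit ultimate bound $b_\infty$ depending only on $L,\delta,T,b_L(T),b,V(\mathbf 0)$; this makes the uniformity over initial conditions, which \Prop{t:stability} requires, completely transparent, whereas the paper's final step (the finiteness of the excursion bound $b_1$) is asserted rather than derived. What the paper's argument buys in exchange is a quantitative bound on the return time to the ball, linear in $\|\varscaled(u)\|$, which your discrete argument does not produce. Your two caveats — that the drift is only needed at the sample times, and that the pre-$n_1$ segment is irrelevant to the $\limsup$ — are both accurate.
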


 \begin{proof}
For each initial condition  $\varscaled(0)=\varqsa$ and  $u\ge u_0$,
  denote
\[
\tau = \tau(\varqsa,u) \eqdef  \min(v\ge 0: \|\varscaled(u+v)\|\le b)
\]
where $u_0$ and $b$ are defined in the lemma.  If $ \|\varscaled(u)\|\le b$ then $\tau=0$,  and  if $\|\varscaled(u+v)\|>b$ for all $v\ge 0$, set $\tau =\infty$. 
For $m\in\mathbb{N}$, define $\tau_m = \min\{\tau,m\}$. Then,
\begin{align*}
-\tau_m b \delta &\ge - \delta \int_{u}^{u+\tau_m} \|\varscaled(w)\| \, dw
\\
&\ge  \int_{u}^{u+\tau_m} (V(\varscaled(w+T)) - V(\varscaled(w)))\, dw
\\
&= \int_{u+ \tau_m}^{u+\tau_m +T} V(\varscaled(w))\, dw -  \int_{u}^{u+T} V(\varscaled(w))\, dw 
\\
&\ge  - \int_{u}^{u+T} V(\varscaled(w))\, dw.
\end{align*}
The right hand side is independent of $m$, which establishes the upper bound
\[
\tau\le \frac{1}{ b \delta} 
 \int_{u}^{u+T} V(\varscaled(w))\, dw.
\]
Under the Lipschitz assumption on $V$, \Lemma{t:ODEbdd} can be applied to establish that for some finite constant $b_V$
\[
\tau\le b_V (1+\|\varscaled(u)\|)
\]
Hence $\tau(\varqsa,u)$ is everywhere finite.  

The uniform bound is used to construct a compact set $S$  that is \textit{absorbing}:   If $u\ge u_0$ and $\varscaled(u)\in S$,  then $\varscaled(u+v)\in S$ for $v\ge 0$.   Define this set to be a closed ball $S = \{ \varqsa : \|\varqsa\|\le b_1$,  where
\begin{align*}
    b_1 = \sup \{ \| \varscaled(u+v) \| :   u\ge u_0\,, \
					 v\le \tau(\varqsa,u)\, ,   \\
					 \|\varscaled(u)\|\le b+1\,, \  \varscaled(0)=\varqsa\in\Re^d \}
\end{align*}
That is,  $b_1$ bounds the maximum norm of an excursion of $\bfvarqsa$ after leaving the smaller set $\{ \varqsa : \|\varqsa\|\le b+1\}$. 
%
%\rd{nuts!
%This establishes a uniform bound on $\tau_m$ for all $m\in\mathbb{N}$, thus proving $\tau<\infty$. Now, using Gronwall's inequality (see \cite{hirsmadev04}) and the Lipschitz property of $f$, we can obtain the following bound on the growth of $\bfvarqsa$: %(see Appendix, \Section{sec:growthbnd})
%For some $b_1<\infty$,
%\[\|\varscaled(u+v) - \varscaled(u)\| \le b_1u\|\varscaled(u)\| \le b_1\tau\|\varscaled(u)\| \ \ \textrm{for }u\le\tau.\]
%Thus, we have $\|\varscaled(u)\| \le b(1+b_1\tau),\ \ u\ge u_0.$ 
%This proves ultimate boundedness of $\bfvarqsa$.}
\end{proof}

The proof of Proposition \ref{t:stability} is then obtained by establishing the conditions of Lemma \ref{thm:drift}.     To do so,    trajectories of $\bfvarscaled$ are compared with those of $\bfvarode$. Recall the definition of $ \varode^u$ is defined in 
\eqref{eq:init}. We have the suggestive representations:
\begin{equation}
\begin{aligned}
\varscaled(u+v) &=\varscaled(u)+\int_u^{u+v}  f(\varscaled(w),  \xi(g^{-1}(w)) ) \, dw
\\
\varode^u(u+v) &=\varscaled(u)+\int_u^{u+v}  \barf(\varode^u(w))  \, dw \,,\qquad u,v\ge 0\, .
\end{aligned}
\label{e:thetaComparisons}
\end{equation}

A Law of Large Numbers (LLN) is obtained  for the time scaled process $\{\xi(g^{-1}(u))\}_{u\ge 0}$. Notice the difference with a conventional LLN. Here, the interval of integration is some arbitrary fixed $T$, and the averaging becomes more accurate as the interval is shifted towards infinity.  The proof is given in the Appendix.  

\begin{lemma}
\label{thm:LLN}
For any $u,T>0$, $\|\varqsa\|\geq 1$, the function $f$ satisfies the following bound:
 \begin{equation}
 \label{eq:LLN}
 \left\|\frac{1}{T}\int_u^{u+T} f(\varqsa,\xi(g^{-1}(w)))\, dw \ -\  \bar{f}(\varqsa)\right\| \le \epsy_f(u)\|\varqsa\|/T,
 \end{equation}
where $\epsy_f(u)\rightarrow 0$ as $u\rightarrow\infty$.
\end{lemma}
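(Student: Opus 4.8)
\textbf{Proof proposal for Lemma~\ref{thm:LLN}.}
The plan is to reduce the time-scaled average back to an ordinary-time average and then invoke Assumption (A3). Recall $u = g(t) = \int_0^t a(r)\,dr$, so a change of variables $w = g(s)$, $dw = a(s)\,ds$, converts an integral over $w\in[u,u+T]$ into an integral over $s\in[t,t']$ where $t=g^{-1}(u)$ and $t'=g^{-1}(u+T)$. However, the integrand in \eqref{eq:LLN} has no $a(s)$ weight, so the change of variables introduces a factor $1/a(s)$. The cleaner route is to note that $f(\varqsa,\xi(g^{-1}(w)))$ as a function of $w$ is exactly the original noise field evaluated along the reparametrized path, and to write
\[
\int_u^{u+T} f(\varqsa,\xi(g^{-1}(w)))\,dw
= \int_t^{t'} f(\varqsa,\xi(s))\, a(s)\,ds,
\qquad t = g^{-1}(u),\ t' = g^{-1}(u+T).
\]
First I would split this as $\int_t^{t'} f(\varqsa,\xi(s))\,a(s)\,ds = a(t)\int_t^{t'} f(\varqsa,\xi(s))\,ds + \int_t^{t'}[a(s)-a(t)]f(\varqsa,\xi(s))\,ds$, and likewise write $T = \int_u^{u+T}dw = \int_t^{t'} a(s)\,ds$, so that $\bar f(\varqsa)$ can be matched term by term.

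The first term is controlled by (A3): $\bigl\|\int_t^{t'} f(\varqsa,\xi(s))\,ds - (t'-t)\bar f(\varqsa)\bigr\| \le b_0(1+\|\varqsa\|)$, a bound \emph{uniform in} $t$, which after dividing by $T$ and using $\|\varqsa\|\ge1$ contributes $O(a(t))\cdot(1+\|\varqsa\|)/T = O(a(t))\|\varqsa\|/T$; since $a(t)\downarrow0$ by (A5) and $t=g^{-1}(u)\to\infty$ as $u\to\infty$, this vanishes. One must also convert $a(t)(t'-t)$ back into $T$: the discrepancy $|T - a(t)(t'-t)| = |\int_t^{t'}[a(s)-a(t)]\,ds|$ is again an error term. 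The second term, $\int_t^{t'}[a(s)-a(t)]f(\varqsa,\xi(s))\,ds$, is where the monotonicity of $\bfma$ in (A5) does the work: since $a$ is non-increasing, $0\le a(t)-a(s)\le a(t)-a(t')$ for $s\in[t,t']$, so this integral is bounded in norm by $[a(t)-a(t')]\cdot\sup_s\|f(\varqsa,\xi(s))\|\cdot(t'-t)$. Using the Lipschitz bound (A4) on $f$ together with boundedness of $\xi$ over the relevant range (or, more carefully, (A3) with $\varqsa$ fixed to bound $\sup$-type quantities), one gets a bound of the form $C[a(t)-a(t')](t'-t)(1+\|\varqsa\|)$; dividing by $T=\int_t^{t'}a(s)\,ds \ge a(t')(t'-t)$ yields $C\,[a(t)/a(t')-1](1+\|\varqsa\|)$, which tends to $0$ provided $a(t)/a(t')\to1$. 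The latter follows because $g(t')-g(t)=T$ is fixed while $a\to0$: e.g.\ for $a(r)=(1+r)^{-1}$ one has $t'=e^{T}(1+t)-1$ so $a(t)/a(t')=e^{T}\to$ a constant, which is \emph{not} $\to 1$ — so this naive splitting needs refinement.

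The honest main obstacle is exactly this last point: the time-scaled noise $\{\xi(g^{-1}(w))\}$ is not merely a time shift of $\bfxi$ but a genuine reparametrization, and over a $w$-window of fixed length $T$ the corresponding $s$-window has length growing like $e^{u}T$. So I would instead not change variables at all, and work directly in the $w$ variable. Define $F(\varqsa,w) \eqdef f(\varqsa,\xi(g^{-1}(w)))$ and verify that the partial-integral bound (A3), rephrased in the $w$-clock, still holds: one checks that $\int_0^{W} F(\varqsa,w)\,dw = \int_0^{g^{-1}(W)} f(\varqsa,\xi(s))a(s)\,ds$ and, using (A3) plus summation/Abel rearrangement against the monotone weight $a(\cdot)$, that $\bigl\|\frac1W\int_0^{W}F(\varqsa,w)\,dw - \bar f(\varqsa)\bigr\|\to 0$ as $W\to\infty$ at a rate controlled by $a(g^{-1}(W))$ times $(1+\|\varqsa\|)$. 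The bound \eqref{eq:LLN} on a window $[u,u+T]$ then follows by writing $\int_u^{u+T}F\,dw = \int_0^{u+T}F\,dw - \int_0^u F\,dw$ and applying the endpoint estimate twice, absorbing the fixed length $T$ and tracking that the leading error is $\epsy_f(u)\|\varqsa\|/T$ with $\epsy_f(u)=O\bigl(a(g^{-1}(u))\cdot(\text{mild factor})\bigr)\to 0$. This Abel-summation step against the decreasing gain — making precise how (A3) transfers through the time change — is the crux of the argument; everything else is routine triangle-inequality bookkeeping.
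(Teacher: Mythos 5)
You correctly identify the crux: after the substitution $w=g(s)$ the window integral becomes $\int_{t_0}^{t_1} a(s) f(\varqsa,\xi(s))\,ds$ with $t_0=g^{-1}(u)$, $t_1=g^{-1}(u+T)$, and the right tool is Abel summation (integration by parts) of the monotone weight $a(\cdot)$ against the partial integrals controlled by (A3). This is exactly the paper's route: with $\tilf(\varqsa,\xi(s))=f(\varqsa,\xi(s))-\barf(\varqsa)$ and $\clE_f(\varqsa,t)=\frac1t\int_0^t\tilf(\varqsa,\xi(s))\,ds$, one writes
\[
\int_{t_0}^{t_1} a(s)\tilf(\varqsa,\xi(s))\,ds \;=\; \bigl[\,t\,a(t)\,\clE_f(\varqsa,t)\,\bigr]_{t_0}^{t_1}\;-\;\int_{t_0}^{t_1} s\,a'(s)\,\clE_f(\varqsa,s)\,ds ,
\]
and since (A3) gives $\|t\,\clE_f(\varqsa,t)\|\le 2b_0\|\varqsa\|$ for $\|\varqsa\|\ge 1$ while monotonicity gives $\int_{t_0}^{t_1}(-a'(s))\,ds=a(t_0)-a(t_1)$, every term is bounded by a multiple of $a(t_0)\|\varqsa\|$, yielding $\epsy_f(u)=4b_0\,a(g^{-1}(u))\to0$. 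Your first splitting, $a(s)=a(t_0)+[a(s)-a(t_0)]$, indeed fails for exactly the reason you give, and you were right to discard it.

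The gap is in the assembly of your second route. Writing $\int_u^{u+T}F\,dw=\int_0^{u+T}F\,dw-\int_0^{u}F\,dw$ (with your $F(\varqsa,w)=f(\varqsa,\xi(g^{-1}(w)))$) and ``applying the endpoint estimate twice'' cannot produce a vanishing $\epsy_f(u)$: the prefix estimate obtainable from the same integration by parts is only $\bigl\|\int_0^{W}F(\varqsa,w)\,dw-W\barf(\varqsa)\bigr\|\le 4b_0\,a(0)\|\varqsa\|$, i.e.\ each prefix error is $O(1)$ with a constant governed by $a(0)$, and the two errors do not cancel upon subtraction; the triangle inequality leaves you with a bounded, non-decaying remainder. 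The decay as $u\to\infty$ comes precisely from performing the integration by parts \emph{on the window} $[t_0,t_1]$, where both boundary terms and the total-variation term are controlled by $a(t_0)=a(g^{-1}(u))$ rather than $a(0)$. So you have named the correct mechanism but the concluding step you describe would fail; replacing it with the windowed integration by parts displayed above closes the argument and recovers the paper's constant.
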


\begin{proof}
Denote $\tilf(\varqsa,\xi(w))=f(\varqsa,\xi(w))$ for each $w$ and $\varqsa$, and
\[
\clE_f(\varqsa,t) \eqdef  \frac{1}{t}\int_0^t \tilf(\varqsa,\xi(w))\, dw \, .
\]
By assumption (A3), for $\|\varqsa\|\ge 1$,
\begin{equation}
\label{eq:fhatbnd}
\|\clE_f(\varqsa,t)\| \le b_0(1+\|\varqsa\|)/t\le 2b_0\|\varqsa\|/t.
\end{equation}
The following integral is simplified using integration by parts:
\begin{align*}
\int_{t_0}^{t_1} a(t) \tilf(\varqsa,\xi(t))\, dt 
&= \left[a(t)\int_0^t \tilf(\varqsa,\xi(r))\, dr\right]_{t_0}^{t_1} 
\\
&\ \ \ - \int_{t_0}^{t_1} a'(t) \int_0^t \tilf(\varqsa,\xi(r))\, drdt
\\
&= [t_1a(t_1)\clE_f(\varqsa,t_1) - t_0a(t_0)\clE_f(\varqsa,t_0)]
\\
&\ \ \  - \int_{t_0}^{t_1} ta'(t)\clE_f(\varqsa,t)\, dt
\end{align*}
Rearranging and taking norms, we obtain on applying \eqref{eq:fhatbnd}  
\[
\left\|\int_{t_0}^{t_1} a(t)\tilf(\varqsa,\xi(t))\, dt  \right\| 
	\le 4b_0a(t_0)\|\varqsa\|.
\]
We have used the fact that $a(t)$ is non-increasing.
Letting $t_0=g^{-1}(u),\ t_1=g^{-1}(u+T)$ and $t=g^{-1}(w)$ yields
\begin{align*}
\left\|\frac{1}{T}\int_u^{u+T} f(\varqsa,\xi(g^{-1}(w)))\, dw \ -\  \bar{f}(\varqsa)\right\| \\
\qquad \le 4b_0 a(g^{-1}(u))\|\varqsa\|/T.
\end{align*}
Set $\epsy_f(u) \eqdef 4b_0  a(g^{-1}(u))$. As $u\rightarrow\infty$, $g^{-1}(u)\rightarrow\infty$ and hence, $\epsy_f(u)\rightarrow 0$. This completes the proof.
\end{proof}

The next lemma bounds the difference between $\bfvarscaled$ and
$\bfvarode^u$. This bound is then used to establish a drift condition for $\bfvarscaled$.   

\begin{lemma}
\label{thm:growthbnd}
For some $\bar{b}<\infty$ and any $0 < T\le 1$, 
\begin{equation}
\label{eq:growthbnd}
\|\varscaled(u+T) - \varode^u(u+T)\| \le( \bar{b} T^2 + e^L \epsy_f(u))\|\varscaled(u)\|\,
\end{equation}
whenever $\|\varscaled(u)\|\ge 1$, where $\epsy_f(u)$ is given by Lemma \ref{thm:LLN} and $L$ is given by Assumption (A4).
 \qed
\end{lemma}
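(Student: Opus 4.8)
The plan is to bound $e(v) \eqdef \|\varscaled(u+v) - \varode^u(u+v)\|$ for $v\in[0,T]$ directly from the integral representations \eqref{e:thetaComparisons}, reducing the estimate to a single application of Gronwall's inequality. Subtracting the two identities in \eqref{e:thetaComparisons} gives
\[
\varscaled(u+v) - \varode^u(u+v) = \int_0^v \bigl[ f(\varscaled(u+s),\xi(g^{-1}(u+s))) - \barf(\varode^u(u+s)) \bigr]\, ds ,
\]
and I would split the integrand into three pieces: (a) $f(\varscaled(u+s),\xi(g^{-1}(u+s))) - f(\varscaled(u),\xi(g^{-1}(u+s)))$; (b) $f(\varscaled(u),\xi(g^{-1}(u+s))) - \barf(\varscaled(u))$; and (c) $\barf(\varscaled(u)) - \barf(\varode^u(u+s))$.

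The term (b) is the only one requiring the ergodic-averaging estimate. Since $\|\varscaled(u)\|\ge 1$ by hypothesis, \Lemma{thm:LLN} applies with $\varqsa=\varscaled(u)$ and an \emph{arbitrary} horizon, giving $\bigl\|\int_0^v [f(\varscaled(u),\xi(g^{-1}(u+s))) - \barf(\varscaled(u))]\, ds\bigr\| \le \epsy_f(u)\|\varscaled(u)\|$ for every $v\in[0,T]$; the key point is that $\epsy_f(u)=4b_0 a(g^{-1}(u))$ does not grow with the horizon, so this piece contributes only the flat quantity $\epsy_f(u)\|\varscaled(u)\|$. For (a), the Lipschitz bound in (A4) together with the first inequality of \Lemma{t:ODEbdd} gives $\|(\textrm{a})\| \le L\|\varscaled(u+s)-\varscaled(u)\| \le L b_L(1)(1+\|\varscaled(u)\|)s$ for $s\le T\le 1$ (using that $b_L$ is non-decreasing). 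For (c), I would write $\|\varscaled(u) - \varode^u(u+s)\| \le \|\varscaled(u)-\varscaled(u+s)\| + e(s)$ and bound the first summand as in (a), obtaining $\|(\textrm{c})\| \le L b_L(1)(1+\|\varscaled(u)\|)s + L e(s)$; it is the retained term $L e(s)$ that will feed the Gronwall argument.

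Collecting the three bounds, integrating over $[0,v]$ with $v\le T\le 1$, and using $1+\|\varscaled(u)\|\le 2\|\varscaled(u)\|$ (valid because $\|\varscaled(u)\|\ge 1$) yields
\[
e(v) \le \bigl[\epsy_f(u) + 2L b_L(1) v^2\bigr]\|\varscaled(u)\| + \int_0^v L\, e(s)\, ds .
\]
The bracketed prefactor is non-decreasing in $v$, so Gronwall's inequality (Proposition~\ref{t:Gronwall}) with $\beta=L$ gives $e(v) \le \bigl[\epsy_f(u) + 2L b_L(1) v^2\bigr]\|\varscaled(u)\|\, e^{Lv}$. Evaluating at $v=T$ and using $e^{LT}\le e^L$ since $T\le 1$ produces the claim with $\bar b \eqdef 2L b_L(1) e^L$.

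The argument is largely routine; the one point genuinely deserving care is that the ergodic estimate of \Lemma{thm:LLN} must be applied with horizon $v$ (not $T$) and is uniform in that horizon, so term (b) does not acquire an $s$-integral — otherwise one is left with a bound of the wrong order in $T$. One must also track the standing hypothesis $\|\varscaled(u)\|\ge 1$, needed both to invoke \Lemma{thm:LLN} and to absorb $1+\|\varscaled(u)\|$ into $2\|\varscaled(u)\|$.
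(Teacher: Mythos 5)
Your proposal is correct and follows essentially the same route as the paper's proof: the same integral representation from \eqref{e:thetaComparisons}, the same add-and-subtract around $\varscaled(u)$ isolating the ergodic term handled by \Lemma{thm:LLN}, the same use of \Lemma{t:ODEbdd} for the $O(v^2)$ piece, and the same Gronwall step, yielding the identical constant $\bar b = 2Lb_L(1)e^L$. Your remark that the bound from \Lemma{thm:LLN} is flat in the horizon $v$ is exactly the point the paper relies on as well.
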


\begin{proof}
Denote $\clE^u(w) =  \varode^u(w) - \varscaled(w) $ for $w\ge u$.   The pair of identities  \eqref{e:thetaComparisons} give 
\begin{equation}
\begin{aligned}
\clE^u(u+v) &=  \int_u^{u+v}  [ \barf(\varscaled(w)) )   - f(\varscaled(w),  \xi(g^{-1}(w)) ) ] \, dw
\\
 &\quad +   \int_u^{u+v} [ \barf(\varode^u(w)) ) -\barf(\varscaled(w)) ) ]\, dw \,, \, \, u,v\ge 0\, .
\end{aligned}
\label{e:clEu}
\end{equation} 
The   Lipschitz conditions in (A4)  is used to  bound the integrands:
\[
\begin{aligned}
&\|  \barf(\varscaled(w)) )   - \barf(\varscaled(u)) )     \| \le  L   \| \varscaled(w) - \varscaled(u) \|
\\
&\|    f(\varscaled(w),  \xi(g^{-1}(w)) )    - f(\varscaled(u),  \xi(g^{-1}(w)) )  \|   \\
& \qquad \qquad \le  L   \| \varscaled(w) - \varscaled(u) \|
 \\
&\|  \barf(\varode^u(w)) ) -\barf(\varscaled(w)) \|  \le \|\clE^u(w)\| 
\end{aligned}
\]
Consequently, for any $0<T\le 1$ and any
$0\le v\le T$,
\[
\begin{aligned}
\| \clE^u(u+v)  \|&\le \Bigl\|   \int_u^{u+v}  [ \barf(\varscaled(u)) )   - f(\varscaled(u),  \xi(g^{-1}(w)) ) ] \, dw \Bigr\| 
\\
 &\quad + 2  L \int_u^{u+v}  \| \varscaled(w) - \varscaled(u) \| \, dw
 \\
 &\quad + L  \int_u^{u+v} \| \clE^u(w)  \| \, dw \,,\qquad u,v\ge 0\, .
\end{aligned}
\]
Applying \Lemma{thm:LLN}, when $\|\varscaled(u)\|\ge 1$,
\[
\begin{aligned}
\| \clE^u(u+v)  \|& \le  L  \int_u^{u+v} \| \clE^u(w)  \| \, dw   + \epsy_s^u(v)
\\
 \epsy_s^u(v)
& \le \epsy_f(u)\|\varscaled(u)\| +  2  L \int_u^{u+v}  \| \varscaled(w) - \varscaled(u) \| \, dw\, .
\end{aligned}
\]
\Lemma{t:ODEbdd} then gives  $ \epsy_s^u(v) \le  \epsy_f(u)\|\varscaled(u)\| +   L b_L(T)  (1+\|\varscaled(u)\| ) v^2$.    Applying
 Gronwall's Lemma, for any $0<T\le 1$  
\[
\| \clE^u(u+T)  \| \le    \Bigl[ \epsy_f(u)\|\varscaled(u)\| +   L b_L(1)  (1+\|\varscaled(u)\| ) T^2 \Bigr]  e^{L}.
\] 
 This
  completes the proof.
\end{proof}

\textit{Proof of Proposition \ref{t:stability}}
Recall that $V$ is the Lyapunov function  and $c_0>0$ is the constant introduced in (A2). 
For $0\le T \le 1$, $\|\varscaled(u)\|\ge c_0+1$,
\begin{align*}
&V(\varscaled(u+T)) - V(\varscaled(u))  = V(\varscaled(u+T))\\
&\qquad - V(\varode^u(u+T))   + V(\varode^u(u+T)) - V(\varode^u(u)) 
		\\ 
&\, \le |V(\varscaled(u+T)) - V(\varode^u(u+T))| \\
&\qquad + V(\varode^u(u+T)) - V(\varode^u(u))\\
&\, \le L\|\varscaled(u+T) - \varode^u(u+T)\|  - T\|\varscaled(u)\|\\
&\, \le L( \bar{b}T^2 + e^L\epsy_f(u))\|\varscaled(u)\| - T\|\varscaled(u)\|,
\end{align*}
where the second inequality follows from the Lipschitz assumption on $V$ and the last inequality uses \Lemma{thm:growthbnd}.  Let us choose $T>0x$ small enough to make $2L \bar{b} T^2 \le T/2$, and then $u_0$ large enough so that $e^L \epsy_f(u)\le \bar{b} T^2$ for all $u \ge u_0$, which leads to
\[
	V(\varscaled(u+T)) - V(\varscaled(u))\le -\frac{T}{2} \|\varscaled(u)\|.
\]
\Lemma{thm:drift} completes the proof. \qed

\subsection{Proof of Lemma \ref{thm:limsuplimsup}}

The first step in the proof  is a  variation of \Lemma{thm:LLN}.  
\begin{lemma}
\label{thm:limsup1}
Assume that $\bfvarscaled$ is bounded. 
Then, for any $T>0$,
\[
	\lim_{u\rightarrow\infty} \sup_{v\in[0,T]}\Bigl\|
		\int_u^{u+v} \bigl[ f(\varscaled(w),\xi(g^{-1}(w))) - \bar{f}(\varscaled(w) ) \bigr]\, dw
							\Bigr \| = 0.
\]
\qed
\end{lemma}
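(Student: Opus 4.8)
The plan is to run the standard ``freeze the slowly‑varying variable on a fine mesh'' argument, using that $\bfvarscaled$ is bounded --- say by $B\eqdef\sup_{u\ge0}\|\varscaled(u)\|<\infty$ --- together with \Lemma{thm:LLN} and the a~priori modulus‑of‑continuity bound \Lemma{t:ODEbdd}. A preliminary observation is that \Lemma{thm:LLN} already gives more than its statement: its proof produces the estimate $\|\int_{t_0}^{t_1}a(t)\tilf(\varqsa,\xi(t))\,dt\|\le 4b_0 a(t_0)(1+\|\varqsa\|)$, whose right‑hand side does not involve $t_1$, and which is valid for every $\varqsa\in\Re^d$ (the restriction $\|\varqsa\|\ge1$ there only serves to replace $1+\|\varqsa\|$ by $2\|\varqsa\|$). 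Translating through $w=g(t)$, this means: for all $u\ge0$, all $v\ge0$, and all $\varqsa$,
\[
\Bigl\|\int_u^{u+v}\bigl[f(\varqsa,\xi(g^{-1}(w)))-\bar f(\varqsa)\bigr]\,dw\Bigr\|\le\epsy_f(u)\,(1+\|\varqsa\|),
\]
with $\epsy_f(u)=4b_0\,a(g^{-1}(u))\downarrow0$. The uniformity in the horizon $v$ is what will let me take $\sup_{v\in[0,T]}$ at the end for free.

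Given $T>0$ and $\varepsilon>0$, I would fix a mesh size $\delta\in(0,1]$, set $N=\lceil T/\delta\rceil$ and $t_k=u+k\delta$, and for arbitrary $v\in[0,T]$ split $\int_u^{u+v}$ into the at most $N$ pieces $\int_{[t_k,t_{k+1}]\cap[u,u+v]}$. On each piece I replace the integrand $f(\varscaled(w),\xi(g^{-1}(w)))-\bar f(\varscaled(w))$ by its frozen version $f(\varscaled(t_k),\xi(g^{-1}(w)))-\bar f(\varscaled(t_k))$; the error incurred is, by the Lipschitz bounds (A4) and \Lemma{t:ODEbdd}, at most $2L\,\|\varscaled(w)-\varscaled(t_k)\|\le 2L\,b_L(\delta)(1+B)\delta$ pointwise, hence at most $2L\,b_L(\delta)(1+B)\delta^2$ after integrating over a cell. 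Each frozen piece is bounded by $\epsy_f(t_k)(1+B)\le\epsy_f(u)(1+B)$ --- using $\|\varscaled(t_k)\|\le B$ and $\epsy_f$ non‑increasing --- via the uniform estimate above. Summing the at most $N$ cells, and using that $b_L$ is non‑decreasing so $b_L(\delta)\le b_L(1)$ and that $N\delta\le T+1$, I obtain the uniform‑in‑$v$ bound
\[
\sup_{v\in[0,T]}\Bigl\|\int_u^{u+v}\bigl[f(\varscaled(w),\xi(g^{-1}(w)))-\bar f(\varscaled(w))\bigr]\,dw\Bigr\|
\le N\,\epsy_f(u)(1+B)\;+\;2L\,b_L(1)(1+B)(T+1)\,\delta .
\]

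To conclude I choose the parameters in the right order: first pick $\delta\in(0,1]$ small enough that the second term is below $\varepsilon/2$; then, with $\delta$ (and hence $N$) frozen, use $\epsy_f(u)\to0$ to pick $u_0$ with $N\,\epsy_f(u)(1+B)<\varepsilon/2$ for all $u\ge u_0$. Since $\varepsilon$ was arbitrary, this yields the claimed limit. I expect the only genuinely load‑bearing points --- and thus the ``hard part,'' such as it is --- to be (i) extracting the horizon‑uniform form of \Lemma{thm:LLN} from its proof, and (ii) committing to the quantifier order $\delta$‑then‑$u$, since letting $\delta\to0$ first would let $N\to\infty$ and destroy the bound; the freezing estimate itself is a mechanical use of (A4) and \Lemma{t:ODEbdd}, and handling the partial final cell requires nothing beyond what is already used for full cells.
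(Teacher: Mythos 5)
Your proposal is correct and follows essentially the same route as the paper's proof: partition $[u,u+v]$ into a mesh of size $\delta$, freeze $\varscaled$ at the left endpoint of each cell (controlling the replacement error via (A4) and \Lemma{t:ODEbdd}), bound each frozen piece by \Lemma{thm:LLN}, and then send $u\to\infty$ for fixed $\delta$ before letting $\delta\to 0$. Your only departure is cosmetic but welcome: you note explicitly that the bound underlying \Lemma{thm:LLN} is uniform in the integration horizon and valid without the restriction $\|\varqsa\|\ge 1$, a point the paper's proof uses implicitly.
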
 

\begin{proof}
Denote %$e_f(w) = f(\varscaled(w),\xi(g^{-1}(w))) - \bar{f}(\varscaled(w) )$,  and
\[
\clE_{e_f}^u(u+v) = \int_u^{u+v}  [f(\varscaled(w),\xi(g^{-1}(w))) - \bar{f}(\varscaled(w) )] \, dw
\]
The lemma states that this converges to zero  as $u\to\infty$, uniformly for $v$ in bounded intervals, provided $\bfvarqsa$ is bounded. 

To prove the assertion, fix $\delta>0$ and denote $u_k = u+k\delta$ for $k\ge 0$.   
As in the theory of Riemannian integration, the Lipschitz conditions in (A4) imply the following bound: 
\begin{align*}
&\clE_{e_f}^u(u+v)   \\
& \, =     \sum_{k=0}^{n_v-1}  	
		 \int_{u_k}^{u_k+\delta } \bigl[ f(\varscaled(u_k),\xi(g^{-1}(w))) - \bar{f}(\varscaled(u_k) ) \bigr]\, dw  +   \epsy_{e_f}^u     
\end{align*}
where $n_v$ denotes the integer part of $v/\delta$,  and $\| \epsy_{e_f}^u\|  \le  b_L v \delta$ 
for some constant $b_L<\infty$.  The bound is uniform in $u$ under the assumption that $\bfvarqsa$ is bounded. 

\Lemma{thm:LLN} and the triangle-inequality imply the bound
\[
\| \clE_{e_f}^u(u+v)  \|  \le     \sum_{k=0}^{n_v-1}  	
\epsy_f(u_k)\|\varscaled(u_k)\|     +   b_L v \delta 
\]
Let $b_\varqsa<\infty$ denote a constant satisfying $\|\varscaled(u)\|\le b_\varqsa$ for all $u$.  Then, 
\[
\| \clE_{e_f}^u(u+v)  \|  \le    b_\varqsa \frac{v}{\delta}   \sup_{u'\ge u} \epsy(u')   +   b_L v \delta 
\]
\Lemma{thm:LLN} then implies that 
for any $T>0$,
\[
	\limsup_{u\rightarrow\infty} \sup_{v\in[0,T]}\| \clE_{e_f}^u(u+v)  \|  \le b_L T \delta 
\]
This completes the proof, since $\delta>0$ was arbitrary.
\end{proof}

The next result is very similar to Lemma~1 in Chapter~2 of \cite{bor08a}.  
\begin{lemma}
\label{thm:limsup}
Assume that $\bfvarscaled$ is bounded.   Then, for any $T>0$, 
\[
	\lim_{u\rightarrow\infty} \sup_{v\in[0,T]}\|\varscaled(u+v) - \varode^u(u+v)\| = 0.
\]
\qed
\end{lemma}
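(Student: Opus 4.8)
The plan is to reduce this to Gronwall's inequality once the ``noise-averaging'' term has been disposed of via \Lemma{thm:limsup1}. First I would subtract the two integral representations in \eqref{e:thetaComparisons}. Writing $\clE^u(w) = \varode^u(w) - \varscaled(w)$ for $w\ge u$, exactly as in the proof of \Lemma{thm:growthbnd}, this gives, for every $v\ge 0$,
\[
\clE^u(u+v) = \int_u^{u+v} \bigl[ \barf(\varode^u(w)) - \barf(\varscaled(w)) \bigr]\, dw + \int_u^{u+v} \bigl[ \barf(\varscaled(w)) - f(\varscaled(w),\xi(g^{-1}(w))) \bigr]\, dw .
\]

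Next I would estimate the two pieces separately. The first integral is controlled by the Lipschitz bound on $\barf$ from (A4): its norm is at most $L\int_u^{u+v}\|\clE^u(w)\|\, dw$. The second integral is precisely $-\clE_{e_f}^u(u+v)$ in the notation of \Lemma{thm:limsup1}; since $\bfvarscaled$ is bounded by hypothesis, that lemma applies and shows that $\rho(u) \eqdef \sup_{v\in[0,T]}\|\clE_{e_f}^u(u+v)\|$ tends to $0$ as $u\to\infty$. Combining the two estimates, for all $0\le v\le T$,
\[
\|\clE^u(u+v)\| \le \rho(u) + L\int_0^v \|\clE^u(u+s)\|\, ds .
\]

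Finally I would apply Gronwall's inequality, \Prop{t:Gronwall}, to the continuous non-negative function $s\mapsto\|\clE^u(u+s)\|$, with the constant (hence non-decreasing) majorant $\alpha\equiv\rho(u)$ and $\beta = L$; this yields $\|\clE^u(u+v)\| \le \rho(u)e^{Lv}\le\rho(u)e^{LT}$ for every $v\in[0,T]$, and letting $u\to\infty$ gives the claim. There is no genuine obstacle: the substance of the argument lies in \Lemma{thm:limsup1} (which rests on \Lemma{thm:LLN}), where the non-stochastic ``law of large numbers'' was already established. The only points requiring care are (i) taking the Gronwall majorant independent of $v$ so that the resulting bound is uniform over $[0,T]$, and (ii) noting that continuity of $v\mapsto\|\clE^u(u+v)\|$ — needed to iterate the integral inequality legitimately — follows at once from the Lipschitz and boundedness hypotheses via \Lemma{t:ODEbdd}.
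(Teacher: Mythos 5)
Your proposal is correct and follows essentially the same route as the paper's proof: decompose $\clE^u$ via the two integral representations \eqref{e:thetaComparisons} (this is exactly \eqref{e:clEu}), bound the first term by the Lipschitz condition in (A4), absorb the second term into a quantity that vanishes by \Lemma{thm:limsup1}, and finish with Gronwall. If anything, your choice of majorant $\rho(u)=\sup_{v\in[0,T]}\|\clE_{e_f}^u(u+v)\|$ is cleaner than the paper's $\delta^u$, which is written with an $\inf_{u'\ge u}$ that appears to be a typo for a supremum.
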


\begin{proof}
This result is a refinement of \Lemma{thm:growthbnd}, and its proof begins with the representation  
\eqref{e:clEu} for  $\clE^u(w) =  \varode^u(w) - \varscaled(w) $,  $w\ge u$.

The Lipschitz conditions in (A4)  imply the   bound:
\[
|\clE^u(u+v) | \le  \delta^u 
			+ L \int_u^{u+v}  |\clE^u(w)|  \, dw 
\]
where 
\[
\delta^u \eqdef 
\inf_{u'\ge u}
\max_{0\le v\le T} \Big\|
 \int_{u'}^{u'+v}  [ \barf(\varscaled(w)) )   - f(\varscaled(w),  \xi(g^{-1}(w)) ) ] \, dw \Bigr\|
\]
\Prop{t:Gronwall} then gives $|\clE^u(u+v) | \le e^{LT} \delta^u$ for all $u$, and all $0\le v\le 1$.  
The  error term $\delta^u$ vanishes as $u\to\infty$ due to \Lemma{thm:limsup1}.
\end{proof}

\subsection{Proof of Lemma \ref{t:StateTransIntBdd}}

The bound  \eqref{e:intStateTransm} follows from \Lemma{t:STM} and elementary calculus.  The representation of the integral requires further work.

Let $\bfgamma$ denote a continuous vector-valued function of time.  We prove that for any $m\ge 0$,
\begin{equation}
\begin{aligned}
   &\int_0^t \frac{1}{(1+r)^m} S(t;r) \gamma(r)\, dr
 =
 \frac{1}{(1+t)^{m}} \gamma^{I}(t)  - S(t;0) \gamma^{I}(0)
 \\
 & \quad +  
  \int_0^t \frac{1}{(1+r)^{m+1}} S(t;r)[mI+\barA] \gamma^{I}(r)\,   dr\,,\quad t>0\, .
\end{aligned}
\label{e:genSint}
\end{equation}
The identity \eqref{e:Sint} will immediately follow, using $m=1$ and $\gamma(r) = \barA\xi^I(r)$, $r\ge 0$.

The following identity follows from the state transition matrix property that $I = S(t;r) S(r;t) $:
\[
\ddr S(t;r) = - \frac{1}{r+1} S(t;r) \barA
\]
Consequently, for each $m\ge 0$,
\begin{equation}
\ddr  \frac{1}{(1+r)^m}S(t;r)  
=
- \frac{1}{(1+r)^{m+1}}S(t;r)[mI+\barA]
  \label{e:mSint}
\end{equation}

Apply integration by parts:
 \[
 \begin{aligned}
 &\int_0^t \frac{1}{(1+r)^m} S(t;r) \gamma(r)\, dr
 =
 \int_0^t \frac{1}{(1+r)^m} S(t;r)  \frac{d}{dr}\gamma^I(r)\, dr
 \\
 &\qquad \qquad =  
 \frac{1}{(1+r)^m} S(t;r) \gamma^I(r) \Big|_0^t
 \\
 & 
 \qquad \qquad \qquad - \int_0^t\frac{d}{dr} \Bigl( \frac{1}{(1+r)^m}S(t;r) \Bigr) \gamma^I(r)\, dr
\end{aligned}
 \] 
Substituting   \eqref{e:mSint}  completes the proof of \eqref{e:genSint}. \qed
%
%
%\begin{proof}[Proof of \Prop{t:rho-a}]
%Following the proof of \Theorem{t:var}, denote $\nu(t)=(t+1)^\varrho\tilvarqsa(t)$ and differentiate:  
%\[
%\begin{aligned}
%\ddt \nu(t) & = \varrho (t+1)^{\varrho-1}\tilvarqsa(t) +   A\tilvarqsa(t) + \xi(t)  
%\\
%& = \bigl(  \varrho (1+t)^{-1} +(1+t)^{-\varrho}\bigr) A \nu(t) + \xi(t)  
%\end{aligned}
%\]
%\bl{...to be filled in ...}
% \end{proof}

\bibliographystyle{IEEEtran}
\bibliography{IEEEabrv,strings,markov,q,QSAextras,bandits}

 {}
\null  
\null  %Needed with \usepackage{flushend}

\end{document}